\documentclass[12pt,oneside,a4paper,reqno]{amsart}
\usepackage[utf8]{inputenc}
\RequirePackage{fix-cm} % arbitrary font scaling
\usepackage[T1]{fontenc}
\usepackage[english]{babel}
\usepackage{lmodern}
\usepackage{amsmath}
\usepackage{amsthm}
\usepackage{amssymb}
\usepackage{graphicx}
\usepackage{color}
\usepackage{mathtools}
\usepackage{leftidx}
\usepackage{eurosym}
\usepackage{enumerate}
\usepackage{dsfont}

\usepackage{hyperref}	
\usepackage{float}

\usepackage[dvipsnames]{xcolor}
\definecolor{ourcolor}{RGB}{0,102,204}

\usepackage[
   rm={oldstyle=false,proportional=true},
   sf={oldstyle=true,proportional=true},
   tt={oldstyle=true,proportional=true,variable=true},
   qt=false,
 ]{cfr-lm}
 
\usepackage{cite,enumerate,setspace}
\usepackage[normalem]{ulem}
\usepackage{hyperref}
 \hypersetup{
 colorlinks,
 citecolor=ourcolor,
 linkcolor=ourcolor,
 urlcolor=black}

\numberwithin{equation}{section}

\usepackage{geometry}
 \setlength{\oddsidemargin}{0pt}
 \setlength{\textwidth}{16.2cm}
 \setlength{\footskip}{1cm}
 \setlength{\topmargin}{-0.2in}
 \setlength{\headsep}{0.5in}
 \setlength{\textheight}{8.8in}
  \setlength{\parindent}{1cm}
  \setlength{\parskip}{6pt}

 \makeatletter
 \def\@seccntformat#1{\hspace*{0mm}%
  \protect\textup{\protect\@secnumfont
    \ifnum\pdfstrcmp{subsection}{#1}=0 \bfseries\fi% subsection # in \bfseries
    \csname the#1\endcsname
    \protect\@secnumpunct
      }%
 }

%Spaces
\newcommand{\KDN}{\kappa(D)}
\newcommand{\N}{\mathbb{N}}

\newcommand{\R}{\mathbb{R}}
\newcommand{\dl}{\mathrm{d}}

\newcommand{\rn}{{\mathbb{R}^{n}}}

\newcommand{\sn}{\mathbb{S}^{n-1}}
\newcommand{\p}{\partial}

 %Morrey space

%Campanato space

\newcommand{\norm}[2]{\|#1\|_{#2}}%Norm
\newcommand{\mv}[2]{\langle#1 \rangle_{#2}}%Mean value

%Greek letters
\newcommand{\alp}{\alpha}
\newcommand{\veps}{\varepsilon}
\newcommand{\Om}{\Omega}

\DeclareMathOperator{\diam}{diam}

\DeclareMathOperator{\dist}{dist}

\newtheorem{Theorem}{Theorem}[section]
\newtheorem{example}[Theorem]{Example}
\newtheorem{Corollary}[Theorem]{Corollary}
\newtheorem{Proposition}[Theorem]{Proposition}
\newtheorem{Lemma}[Theorem]{Lemma}
\newtheorem*{Conjecture}{Conjecture}

\theoremstyle{definition}
\newtheorem{Definition}[Theorem]{Definition}
\newtheorem{Remark}[Theorem]{Remark}
\newtheorem{Setting}[Theorem]{Convention}
\newtheorem{Assumption}[Theorem]{Assumption}

\newenvironment{theorem}[1][]
{ \begin{Theorem}[#1]}
	{\end{Theorem}}

\newenvironment{definition}[1][]
{ \begin{Definition}[#1]}
	{\end{Definition} }

\newenvironment{corollary}[1][]
{ \begin{Corollary}[#1]}
	{\end{Corollary} }

\newenvironment{proposition}[1][]
{ \begin{Proposition}[#1]}
	{\end{Proposition}}

\newenvironment{lemma}[1][]
{ \begin{Lemma}[#1]}
	{\end{Lemma}}

\newenvironment{onealign}[1][]
{ \begin{equation}\begin{aligned}[#1]}
{\end{aligned}\end{equation}\ignorespacesafterend}

\numberwithin{equation}{section}
% Title
\title[A modular Poincaré--Wirtinger inequality in variable Sobolev spaces]{A modular Poincaré--Wirtinger inequality for Sobolev spaces with variable exponents}
\author[E. Davoli] {Elisa Davoli} 
\author[G. D. Fratta] {Giovanni Di Fratta} 
\author[A. Fiorenza]{Alberto Fiorenza}
\author[L. Happ]{Leon Happ}

\date{}
\AtEndDocument{	
	\hrule
	\medskip
	\small{
		\noindent
		{\sc E. Davoli},
		Institute of Analysis and Scientific Computing,
		TU Wien,
		Wiedner Hauptstra\ss e 8-10, 1040 Vienna, Austria.
		
		\noindent
		E-mail:
		\href{mailto:elisa.davoli@tuwien.ac.at}{\tt elisa.davoli@tuwien.ac.at}.
	}
	
	\medskip \medskip \medskip \hrule  \medskip
	\small{
		\noindent
		{\sc G. Di Fratta},
		Dipartimento di Matematica e Applicazioni ``R. Caccioppoli'',
		Universit\`a degli Studi di Napoli ``Federico II'',
		Via Cintia, Complesso Monte S. Angelo, 80126 Naples, Italy.
		
		\noindent
		E-mail:
		\href{mailto:giovanni.difratta@unina.it}{\tt giovanni.difratta@unina.it}.
	}
	
	\medskip \medskip \medskip \hrule  \medskip
	\small{
		\noindent
		{\sc A. Fiorenza},
		Dipartimento di Architettura,
           Universit\`a degli Studi di Napoli ``Federico II'',
           Via Monteoliveto, I-80134 Napoli, Italy.

          \centerline{\sc and}

        \noindent Istituto per le Applicazioni del Calcolo ``Mauro Picone'', sezione di Napoli, Consiglio Nazionale delle Ricerche, via Pietro Castellino, 111, I-80131 Napoli, Italy.

		\noindent
		E-mail:
		\href{mailto:fiorenza@unina.it}{\tt fiorenza@unina.it}.
	}
   
    \medskip \medskip \medskip \hrule  \medskip
   
    \small{
		\noindent
		{\sc L. Happ},
		Institute of Analysis and Scientific Computing,
		TU Wien,
		Wiedner Hauptstra\ss e 8-10, 1040 Vienna, Austria.

           \centerline{\sc and}

            \noindent MedUni Wien, Währinger Gürtel 18-20, 1090 Vienna, Austria.
		
		\noindent
		E-mail:
		\href{mailto:leon.happ@student.tuwien.ac.at}{\tt leon.happ@student.tuwien.ac.at}.
	}
	\medskip
\hrule
}
\begin{document}
\vskip .2truecm

\begin{abstract}
In the context of Sobolev spaces with variable exponents, Poincaré--Wirtinger inequalities are possible as soon as Luxemburg norms are considered. On the other hand, modular versions of the inequalities in the expected form
\begin{equation*}
        \int_\Omega \left|f(x)-\langle f\rangle_{\Omega}\right|^{p(x)} \ {\mathrm{d} x}
         \leqslant C \int_\Omega|\nabla f(x)|^{p(x)}{\mathrm{d} x},
    \end{equation*}
are known to be \emph{false}. As a result, all available modular versions of the Poincaré- Wirtinger inequality in the variable-exponent setting always contain extra terms that do not disappear in the constant exponent case, preventing such inequalities from reducing to the classical ones in the constant exponent setting. This obstruction is commonly believed to be an unavoidable anomaly of the variable exponent setting.

The main aim of this paper is to show that this is not the case, i.e., that a consistent generalization of the Poincaré-Wirtinger inequality to the variable exponent setting is indeed possible.
Our contribution is threefold. First, we establish that a modular Poincaré--Wirtinger inequality particularizing to the classical one in the constant exponent case is indeed conceivable.
We show that if  \(\Omega\subset \mathbb{R}^n \) is a bounded Lipschitz domain, and if \(p\in L^\infty(\Omega)\), \(p \geqslant 1\), then for every \(f\in C^\infty(\bar\Omega)\) the following generalized Poincaré--Wirtinger inequality holds 
    \begin{equation*}
        \int_\Omega \left|f(x)-\langle f\rangle_{\Omega}\right|^{p(x)} \ {\mathrm{d} x}
         \leqslant C \int_\Omega\int_\Omega \frac{|\nabla f(z)|^{p(x)}}{|z-x|^{n-1}}\ {\mathrm{d} z}{\mathrm{d} x},
    \end{equation*}
    where \(\langle f\rangle_{\Omega}\) denotes the mean of \(f\) over \(\Omega\), and
 \(C>0\) is a positive constant depending only on \(\Omega\) and \(\|p\|_{L^\infty(\Omega)}\).
Second, our argument is concise and constructive and does not rely on compactness results. Third, we additionally provide geometric information on the best Poincaré--Wirtinger constant on Lipschitz domains.

\end{abstract}
\keywords{modular Poincar\'e-Wirtinger inequality, Sobolev spaces with variable exponent, optimal Poincar\'e constant}
\subjclass[2020]{46E35,35A23,26D15}
\maketitle

%%%%%%%%%%%%%%%%%%%%%%%%%%%%%%%%%%%%%%%%%%
\section{Introduction}
{On the one hand, the} Poincaré--Wirtinger inequality is a fundamental result in mathematical
analysis, for it estimates the deviation of a function from its mean value over a
given bounded domain (namely, an open and connected subset) of $\rn$. In its classical form, the Poincaré--Wirtinger
inequality assures, in particular, that if $\Omega$ is a {\emph{bounded}} Lipschitz domain of
$\rn$, $n  \geqslant 1$, and $p \in [1, \infty)$, then there exists a positive
constant $C > 0$, depending only on $\Omega$ (and hence on $n$) and $p$,
such that
\begin{equation}
  \left( \int_{\Omega} \left| f (x) - \mv{f}{\Om} \right|^p \dl x
  \right)^{1 / p}  \leqslant C \left(
  \int_{\Omega} | \nabla f (z) |^p \ \dl z \right)^{1 / p} \quad \forall f \in
  C^{\infty} (\bar{\Omega}). \label{eq:PoincareIntro}
\end{equation}
Here, and in what follows, for open and bounded \(\Om\subset \rn\) with Lebesgue measure \(|\Om|\) and \(f\in L^1(\Om)\) we denote by 
\begin{equation}
	\mv{f}{\Om}
	:= \frac{1}{|\Om|}\int_\Om f(x) \ {\dl x}
\end{equation}
the mean value of \(f\) in \(\Om\).

{When switching from classical Sobolev spaces to Sobolev spaces with variable exponent, on the other hand, inequalities of the form 
\begin{equation}
\label{eq:false}
        \int_\Omega \left|f(x)-\langle f\rangle_{\Omega}\right|^{p(x)} \ {\mathrm{d} x}
         \leqslant C \int_\Omega|\nabla f(x)|^{p(x)}{\mathrm{d} x},
    \end{equation}
 are known to be false {(cf.~\cite{Fan2005}, the discussion in \cite{Di_Fratta_2022}, { and Example \ref{ex:classic-false} below)}}. Additionally, modular Poincaré--Wirtinger inequalities are generally regarded as only achievable by adding further terms to the right-hand side of \eqref{eq:false} which do not disappear when the exponent $p(x)$ reduces to a constant function (see, e.g., \cite[Theorem~8.2.8 (b), p.257]{Diening2011} and \cite{MR2443740,Ciarlet_2011}). In other words, there is currently no variant of \eqref{eq:PoincareIntro} in the setting of Sobolev spaces with variable exponents only involving modular norms and reducing to \eqref{eq:PoincareIntro} for $p(x)\equiv p\in [1,\infty)$.}

 {
In this paper, we show that such a generalization of \eqref{eq:PoincareIntro} in the setting of Sobolev spaces with variable exponents is indeed possible. Our main result is the following.

\begin{theorem}[Poincaré--Wirtinger-type inequality on Lipschitz domains]\label{thm-pw}
Let \(\Om\subset \rn\) be a bounded Lipschitz domain (open and connected). Let further the variable exponent 
	\begin{equation*}
		p: \Om\to [1,\infty)
	\end{equation*}
	be in \(L^\infty(\Om)\). Then for all \(f\in C^\infty(\overline{\Om})\) there holds
		\begin{equation}\label{eq-pw}
			\int_\Om \big|f(x)-\mv{f}{\Om}\big|^{p(x)} \ {\dl x}
			 \leqslant C \int_\Om\int_\Om \frac{|\nabla f(z)|^{p(x)}}{|z-x|^{n-1}}\ \dl z {\dl x}
		\end{equation}
	for some constant \(C>0\) depending only on \(\Om\) and \(p_+:=\norm{p}{L^\infty(\Om)}\) (see (\ref{eq-final-exp})).
\end{theorem}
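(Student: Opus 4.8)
The plan is to derive \eqref{eq-pw} from a single \emph{pointwise} Riesz-potential estimate and then to pass to the variable exponent through one application of Jensen's inequality against the weight \(|z-x|^{-(n-1)}\). The main step — and the one I expect to be the real obstacle — is to prove that there is a constant \(C_\Om>0\), depending only on \(\Om\), with
\begin{equation}\label{eq-plan-ptwise}
	\big|f(x)-\mv{f}{\Om}\big|\ \leqslant\ C_\Om\int_\Om\frac{|\nabla f(z)|}{|z-x|^{n-1}}\ \dl z\qquad\text{for all } f\in C^\infty(\overline\Om),\ x\in\Om.
\end{equation}
For convex \(\Om\) this is the classical potential estimate with explicit constant \(\diam(\Om)^n/(n|\Om|)\): write \(f(x)-f(y)=-\int_0^{|x-y|}\partial_r f(x+r\omega)\,\dl r\) with \(\omega=(y-x)/|y-x|\), average over \(y\in\Om\), and pass to polar coordinates at \(x\). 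For a general bounded Lipschitz domain I would cover \(\Om\) by finitely many subdomains \(\Om_1,\dots,\Om_N\), each star-shaped with respect to a ball \(B_i\subset\Om_i\) (a standard consequence of the Lipschitz-graph structure of \(\p\Om\) together with interior balls), use the star-shaped version of \eqref{eq-plan-ptwise} on each \(\Om_i\) for \(\mv{f}{B_i}\), and control the finitely many differences \(\mv{f}{B_i}-\mv{f}{\Om}\) by a chaining argument along overlapping pieces; each elementary link is bounded by the ordinary Poincaré inequality on a fixed bounded set and then reabsorbed into the right-hand side of \eqref{eq-plan-ptwise} by means of the crude lower bound \(|z-x|^{-(n-1)}\geqslant\diam(\Om)^{-(n-1)}\). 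Tracking the constants through this finite decomposition is exactly what should produce the geometric formula for \(C\) recorded in \eqref{eq-final-exp}, and this bookkeeping, rather than any single inequality, is the bulk of the work.

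Granting \eqref{eq-plan-ptwise}, the passage to variable exponents is soft. Raising \eqref{eq-plan-ptwise} to the power \(p(x)\geqslant1\) and integrating in \(x\) would give
\begin{equation*}
	\int_\Om\big|f(x)-\mv{f}{\Om}\big|^{p(x)}\ \dl x\ \leqslant\ \max\!\big(1,\,C_\Om^{\,p_+}\big)\int_\Om\Big(\int_\Om\frac{|\nabla f(z)|}{|z-x|^{n-1}}\ \dl z\Big)^{p(x)}\dl x.
\end{equation*}
For each fixed \(x\in\Om\), the quantity \(\tau(x):=\int_\Om|z-x|^{-(n-1)}\,\dl z\) is finite and bounded by a constant \(M=M(n,\Om)\) (integrate the radial kernel over a ball of radius \(\diam(\Om)\)), so \(\dl\mu_x(z):=\tau(x)^{-1}|z-x|^{-(n-1)}\,\dl z\) is a probability measure on \(\Om\). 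Writing \(\int_\Om|\nabla f(z)||z-x|^{-(n-1)}\,\dl z=\tau(x)\int_\Om|\nabla f|\,\dl\mu_x\) and applying Jensen's inequality to the convex function \(t\mapsto t^{p(x)}\) yields
\begin{equation*}
	\Big(\int_\Om\frac{|\nabla f(z)|}{|z-x|^{n-1}}\ \dl z\Big)^{p(x)}\ \leqslant\ \tau(x)^{p(x)}\int_\Om|\nabla f(z)|^{p(x)}\ \dl\mu_x(z)\ =\ \tau(x)^{p(x)-1}\int_\Om\frac{|\nabla f(z)|^{p(x)}}{|z-x|^{n-1}}\ \dl z.
\end{equation*}
Since \(p(x)-1\geqslant0\) and \(\tau(x)\leqslant M\), one has \(\tau(x)^{p(x)-1}\leqslant\max(1,M)^{\,p_+-1}\). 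Combining the last three displays would give \eqref{eq-pw} with \(C=\max(1,C_\Om^{\,p_+})\,\max(1,M)^{\,p_+-1}\), depending only on \(\Om\) and \(p_+\), as required.

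Finally, I would check consistency with the classical statement: when \(p(x)\equiv p\in[1,\infty)\), Fubini's theorem turns the right-hand side of \eqref{eq-pw} into \(C\int_\Om|\nabla f(z)|^p\tau(z)\,\dl z\leqslant CM\int_\Om|\nabla f(z)|^p\,\dl z\), so \eqref{eq-pw} indeed reduces to \eqref{eq:PoincareIntro} up to the value of the constant; conversely, in the constant-exponent case \eqref{eq-pw} also follows from \eqref{eq:PoincareIntro} together with \eqref{eq-plan-ptwise}, so the two formulations are equivalent up to constants. In summary, everything after \eqref{eq-plan-ptwise} is a one-line use of Jensen's inequality with the weight \(|z-x|^{-(n-1)}\); the hard part will be \eqref{eq-plan-ptwise} itself on non-convex Lipschitz domains — the star-shaped cover, the chaining, and the constant bookkeeping.
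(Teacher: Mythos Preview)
Your approach is correct but takes a genuinely different route from the paper. You reduce everything to a \emph{pointwise} Riesz-potential bound \eqref{eq-plan-ptwise} with exponent $1$, and only then introduce the variable exponent through a single Jensen inequality against the probability measure $\tau(x)^{-1}|z-x|^{-(n-1)}\,\dl z$. The paper, by contrast, never proves a pointwise estimate: it works in modular form from the very first lemma, establishing inequalities of the type $\int_D|f(x)-f(y)|^{p(x)}\,\dl y\leqslant C(x)\int_D|\nabla f(z)|^{p(x)}|z-x|^{-(n-1)}\,\dl z$ directly (Lemma~\ref{lem-morrey-ss}), then builds the double-integral $\int_D\int_D|f(x)-f(y)|^{p(x)}\,\dl y\,\dl x$ on starshaped pieces (Lemma~\ref{lem-pw-ss}), glues two overlapping pieces (Lemma~\ref{lem-pw-union}, Corollary~\ref{cor-pw-union-arb}), and iterates. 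Jensen appears only at the very end, and only in the trivial form $|f(x)-\mv{f}{\Om}|^{p(x)}\leqslant|\Om|^{-1}\int_\Om|f(x)-f(y)|^{p(x)}\,\dl y$. Your decoupling is cleaner conceptually: all the geometric chaining is done once, at $p=1$, where it is classical (indeed \eqref{eq-plan-ptwise} is known to hold on John domains), and the variable exponent enters in one soft step; the paper's approach, on the other hand, keeps the exponent $p(x)$ visible throughout the chaining and thereby produces the explicit constant in \eqref{eq-final-exp} as a direct output of the recursion, without passing through a separate $p=1$ estimate. Your constant $\max(1,C_\Om^{\,p_+})\max(1,M)^{p_+-1}$ has the same qualitative dependence but a different shape.
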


In Proposition~\ref{prop-union-ss} (see also Appendix~\ref{aux:appendix}) 
we recall that any bounded Lipschitz domain can be written as
the union of a finite chain of domains that are starshaped with respect to balls all having the same radius.
Precisely, it is always possible 
to express $\Om$ under the form $\Om = \cup_{i=1}^N D_i$ for some \(N\in\N\), where any \(D_i\) is a 
bounded domain that is starshaped with 
respect to some ball with radius \(R>0\) included in  \(D_i\) and $D_{i+1}\cap (\cup_{j=1}^iD_{j}) \ne \emptyset$ 
for all $i=1,...,N-1$.
The constant $C$ in \eqref{eq-pw} can be taken as
\begin{equation}\label{PWConstC1}
C:=\frac{\lambda(\Om)}{\omega_nR^n}\, \cdot \, \left(1+2^{(p_+ +1)}\frac{\kappa(\Om)}{\lambda(\Om)}\right)^N\, \cdot \,  \frac{(\max\{\diam(\Om),1\})^{n+p_+-1}}{n|\Om|} \, ,
\end{equation}
with
\begin{equation}\label{PWConstC2}
	\kappa(\Om):=(n+1)\omega_n \diam(\Om)^n,\qquad 	\lambda(\Om)
	:= \inf_{1 \leqslant i \leqslant N-1}   | D_{i+1}\cap ( \cup_{j=1}^iD_{j} ) |.
\end{equation}
Note that in \eqref{PWConstC1} and \eqref{PWConstC2}, $n$ denotes the dimension of the ambient space $\R^n$, 
while $N$ denotes the number of starshaped domains in which the bounded Lipschitz domain $\Om$ can be decomposed. We used the notation $\omega_n$ to indicate the measure of the unit ball in $\rn$.

As a corollary, we deduce the well-known Poincaré--Wirtinger inequality in classical Sobolev spaces (compare, e.g., with \cite[Remark 13.28, p.433]{Leoni2017}, \cite[Comment 3 on Chapter 9, p.312]{Brezis2011}, \cite[Thm 8.11, p.218]{Lieb2001}, \cite[Proposition 10.2, p.437, for the convex case]{DiBenedetto_2016}, \cite[Corollary 5.4.1, p.173]{Attouch2014}, and \cite[(7.45), p.157]{Gilbarg_2001}):

\begin{corollary} \label{cor:classPW}
	For \(\Om\subset \rn\) a bounded Lipschitz domain (open and connected), \(1 \leqslant p<\infty\) and \(f\in C^\infty(\overline\Om)\), there holds
	\begin{equation}\label{eq:classicalPI}
		\int_\Om \big|f(x)-\mv{f}{\Om}\big|^{p} \ {\dl x}
		 \leqslant \tilde{C} \int_\Om {|\nabla f(z)|^{p}}\ {\dl z}.
	\end{equation}
	for some constant \(\tilde{C}>0\) depending only on \(\Om\) and \(p\).
\end{corollary}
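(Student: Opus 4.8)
The plan is to specialize Theorem~\ref{thm-pw} to the constant exponent $p(x)\equiv p$ and then eliminate the Riesz-type kernel $|z-x|^{-(n-1)}$ on the right-hand side of \eqref{eq-pw} by means of the Fubini--Tonelli theorem, using only the boundedness of $\Om$.

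First I would note that for a fixed $p\in[1,\infty)$ the constant function $p\colon\Om\to[1,\infty)$ belongs to $L^\infty(\Om)$ with $p_+=p$, so Theorem~\ref{thm-pw} applies and gives, for every $f\in C^\infty(\overline\Om)$,
\[
\int_\Om\big|f(x)-\mv{f}{\Om}\big|^{p}\,\dl x
\;\leqslant\; C\int_\Om\int_\Om\frac{|\nabla f(z)|^{p}}{|z-x|^{n-1}}\,\dl z\,\dl x ,
\]
where $C$ is the constant in \eqref{PWConstC1}--\eqref{PWConstC2}, now depending only on $\Om$ and $p$.

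Second, since the integrand is nonnegative and measurable I would invoke Tonelli's theorem to interchange the order of integration and then bound the inner integral uniformly in $z\in\Om$: because $\Om$ is bounded we have $\Om\subseteq B_{\diam(\Om)}(z)$ for every $z\in\Om$, and passing to polar coordinates centered at $z$,
\[
\int_\Om\frac{\dl x}{|z-x|^{n-1}}
\;\leqslant\;\int_{B_{\diam(\Om)}(z)}\frac{\dl x}{|z-x|^{n-1}}
= n\,\omega_n\int_0^{\diam(\Om)}\dl r
= n\,\omega_n\,\diam(\Om).
\]
Hence
\[
\int_\Om\int_\Om\frac{|\nabla f(z)|^{p}}{|z-x|^{n-1}}\,\dl z\,\dl x
= \int_\Om|\nabla f(z)|^{p}\Big(\int_\Om\frac{\dl x}{|z-x|^{n-1}}\Big)\dl z
\;\leqslant\; n\,\omega_n\,\diam(\Om)\int_\Om|\nabla f(z)|^{p}\,\dl z ,
\]
and combining this with the previous display yields \eqref{eq:classicalPI} with $\tilde C:=n\,\omega_n\,\diam(\Om)\,C$.

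I do not expect any genuine obstacle here: the entire strength of the statement is carried by Theorem~\ref{thm-pw}, and the only point worth stressing is that the exponent $n-1$ appearing in the kernel of \eqref{eq-pw} (rather than $n$) is precisely what makes $|z-x|^{-(n-1)}$ locally integrable on $\rn$, so that the inner integral above is finite and, in fact, controlled by the ``radius'' $\diam(\Om)$ of the domain. If a marginally sharper constant were desired, one could replace $B_{\diam(\Om)}(z)$ by the ball centered at $z$ having the same volume as $\Om$ (bathtub principle, since $x\mapsto|z-x|^{-(n-1)}$ is radially decreasing), which improves the factor $n\,\omega_n\,\diam(\Om)$ to $n\,\omega_n^{(n-1)/n}|\Om|^{1/n}$.
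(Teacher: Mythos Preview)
Your proposal is correct and follows essentially the same route as the paper: specialize Theorem~\ref{thm-pw} to a constant exponent, swap the order of integration via Fubini--Tonelli, and bound the inner Riesz-type integral uniformly in~$z$. The only cosmetic difference is that the paper directly invokes the bathtub-type estimate (Lemma~\ref{lem-aux-int-bounds}) to obtain the sharper factor $(n+1)\,\omega_n^{1-1/n}|\Om|^{1/n}$, which you mention at the end as an optional refinement of your diameter-based bound $n\,\omega_n\,\diam(\Om)$.
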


Note that the constant $\tilde{C}$ in \eqref{eq:classicalPI} can be taken as $
\tilde{C}:=C (n+1)\omega_n^{1-\frac{1}{n}}|\Om|^{\frac{1}{n}}$,
with $C$ given by \eqref{PWConstC1}.

	Corollary \ref{cor:classPW} above follows immediately from Theorem \ref{thm-pw}.	If the exponent is constant, we apply Fubini's theorem to the right-hand side of (\ref{eq-pw}) to get
	\begin{align*}
            \int_\Om \big|f(x)-\mv{f}{\Om}\big|^{p} \ {\dl x}
		& \leqslant 
		C\int_\Om\int_\Om \frac{|\nabla f(z)|^{p}}{|z-x|^{n-1}}\ {\dl z}{\dl x}\\
		&=C\int_\Om |\nabla f(z)|^{p}\int_\Om\frac{1}{|z-x|^{n-1}}\ {\dl x} \ {\dl z},
	\end{align*}
        with $C$ given by \eqref{PWConstC1}. The inner integral is now uniformly bounded because \(\Om\) has finite measure. % Now, if $\Omega=\emptyset$ the result is immediate. Otherwise, since $|\Omega|>0$, 
        Applying Lemma \ref{lem-aux-int-bounds} in the appendix, we infer that
	\begin{equation*}
		\int_\Om \big|f(x)-\mv{f}{\Om}\big|^{p} \ {\dl x}
		 \leqslant C (n+1)\omega_n^{1-\frac{1}{n}}|\Om|^{\frac{1}{n}}  \int_\Om |\nabla f(z)|^{p}\ {\dl z},
	\end{equation*}
	from which the result follows with $\tilde{C}:=C (n+1)\omega_n^{1-\frac{1}{n}}|\Om|^{\frac{1}{n}}$.
     %with \(\omega_n\) the measure of the unit ball in \(\R^n\).
}

Inequality \eqref{eq:PoincareIntro} has wide-ranging applications in various fields of
mathematics, including partial differential equations, harmonic analysis,
probability theory, and geometric measure theory. Its significance stems from
its ability to provide powerful estimates about the regularity of functions,
making it a key tool for studying a broad variety of mathematical problems.
Furthermore, the knowledge of sharp constants in such functional inequalities,
%or even upper bounds on the optimal constants that clearly emphasize their
or even upper bounds emphasizing the dependence of optimal constants
 on the domain geometry has remarkable uses in the Analysis of PDEs,
Numerical Analysis, and Materials Science (see, e.g.,
{\cite{Acosta2004,Avkhadiev2007,Dautray1990,DiFratta2019,Fratta_2023,Kuznetsov2015,Michlin1981,Payne1960,Verfuerth1999}}).

The Poincar{\'e}--Wirtinger inequality has been extensively studied over the
years, and numerous extended and refined variants that span various functional
settings have been derived. The literature on the constant-exponent case has
grown so large that a thorough review would exceed the scope of this article.
Instead, we focus on the works which are most relevant to our variable-exponent setting and refer the reader to
{\cite{Brezis2011,Gilbarg_2001,Ruzhansky2019}}, as well as to the comprehensive
presentation in {\cite[Chap.~4]{Ziemer89}}, for more details on
Poincar{\'e}-type inequalities in the constant-exponent case.

The modern argument to derive \eqref{eq:PoincareIntro} proceeds by
contradiction and is based on Rellich--Kondrachov compactness theorem. While
the approach is both elegant and simple, it cannot be used to derive
{\emph{modular}} Poincaré--Wirtinger inequalities in Sobolev spaces with
variable exponent because of a lack of
compactness~{\cite{CruzUribe2014,Diening2011}}.
From \cite[Section~8.2]{Diening2011} we know that in the variable
exponent case, one can state norm Poincaré--Wirtinger inequalities (see, e.g., \cite[Theorem~8.2.4(b), p.255]{Diening2011}, \cite[Lemma~8.2.14, p.260]{Diening2011}, \cite[Theorem~8.2.17, p.262]{Diening2011}; see also \cite{Ciarlet_2011}), but modular versions have always some {\emph{price}} to be paid (see, e.g., \cite[Theorem~8.2.8 (b), p.257]{Diening2011} and \cite{MR2443740,Ciarlet_2011}). This phenomenon appears
typically when working with inequalities in the framework of variable
exponents. See, e.g., {\cite{CruzUribe2018}} and {\cite[Section 3.5,
p.107]{CruzUribe2014}}. We also refer to \cite{Di_Fratta_2022} for a unified treatment of zero-trace Poincar{\'e}
inequality on bounded open subsets and weighted Hardy--Poincar{\'e}
inequalities on unbounded domains.

When dealing with variable exponents, a common technique used to compensate
for the lack of a modular Poincaré--Wirtinger inequality is to
lower-bound the variable exponent with its infimum in some part of the domain
and apply the constant version (see, e.g., {\cite{ouaro2021structural}}), or
to upper-bound the variable exponent with its supremum (see, e.g.,
{\cite{Ayadi_2022,Bendahmane_2010}}), with a consequent loss of information.

For the reasons described above, such strategies were not feasible to prove Theorem \ref{thm-pw} and we had to resort to a different, more classical approach. The proof of Theorem \ref{thm-pw} hinges upon two main basic ingredients. First, the fact that on  domains which are starshaped with respect to an open subset (see Definition \ref{def-ss-i} below) Poincaré--Wirtinger inequalities, even in the setting of Sobolev spaces with variable exponents, can be deduced by a direct, albeit quite delicate, application of the Fundamental Theorem of Calculus. Second, the fact that all Lipschitz domains can be written as finite union of pairwise domains being stricly starshaped with respect to open balls having the same radii. As a byproduct, all proof arguments are coincise and constructive, yielding explicit estimates on the Poincaré--Wirtinger constants. While writing the proof we have kept track of such constants and their dependence on the geometry of the domain
because we believe this will be useful for future possible generalizations.

The paper is organized as follows. Below, in Theorem~\ref{thm-pw}, we state our variable exponent modular Poincaré--Wirtinger inequality;
in Corollary \ref{cor:classPW} we show how to retrieve the classical Poincaré--Wirtinger inequality \eqref{eq:PoincareIntro} from it.
In Section~\ref{sec-prelim}, we introduce some notation and 
results about starshaped and Lipschitz domains.
These results are well-known, but we couldn't locate a single reference collecting all of them. Therefore, in Appendix~\ref{aux:appendix}, we recall their proofs 
using our notational setting. 
The proof of Theorem~\ref{thm-pw} is detailed in Section~\ref{sec:mainsec}.

\section{Preliminary definitions and results}\label{sec-prelim}
Throughout this paper, we say that a subset \(D\subset\rn\) is a \emph{domain} if it is open and connected.

{ We begin this section with an explicit counterexample to \eqref{eq:false}

\begin{example}
\label{ex:classic-false}
{ 
For completeness, we show that \eqref{eq:false} cannot hold. The example mimics the one given in \cite{Fan2005} (see the proof of
Theorem 3.1 therein). Let $\Omega$ be an open set and, to easy notation, assume that $\Omega$ contains the ball centered at the origin of $\mathbb{R}^n$ and of radius $1 + \alpha$, { with $0<\alpha<1$}. Let $\eta \in
C_c^{\infty} (\Omega)$, $0 \leqslant \eta \leqslant 1$, be such that $\text{supp}_{\Omega} \eta \subseteq B_{1 + \alpha}$, $\eta \equiv 1$ in $B_1$, and assume the variable exponent function $p \in L^{\infty} (\Omega)$, $p\geqslant 1$, is radially symmetric and strictly increasing, i.e., that $p (x) = \mathsf{p} (| x |)$ for some strictly increasing function $\mathsf{p} : [0, + \infty) \rightarrow [1, + \infty)$.

For every $0 < \lambda < 1$, we set $f_{\lambda} := \lambda \eta$, and we
compute the corresponding Rayleigh quotient:
\begin{align}
  \frac{\int_{\Omega} | \nabla f_{\lambda} (x) |^{p (x)}  \dl
  x}{\int_{\Omega} | f_{\lambda} (x) - \langle f_{\lambda} \rangle_{\Omega}
  |^{p (x)} \dl x} & = \frac{\int_{B_{1 + \alpha} \setminus B_1}
  \lambda^{p (x)} | \nabla \eta (x) |^{p (x)}  \dl x}{\int_{\Omega}
  \lambda^{p (x)} | \eta (x) - \langle \eta \rangle_{\Omega} |^{p (x)}  \dl
  x} \nonumber\\
  & \leqslant  \frac{\int_{B_{1 + \alpha} \setminus B_1} \lambda^{p (x)} |
  \nabla \eta (x) |^{p (x)}  \dl x}{\int_{B_{1 - \alpha}} \lambda^{p (x)} |
  \eta (x) - \langle \eta \rangle_{\Omega} |^{p (x)}  \dl x} . \nonumber
\end{align}
But on $B_1$ one has \ $0 < | \eta (x) - \langle \eta \rangle_{\Omega} | = | 1
- \langle \eta \rangle_{\Omega} | < 1$ and, therefore continuing estimating
from where we left off, we find that
\begin{align}
  \frac{\int_{\Omega} | \nabla f_{\lambda} (x) |^{p (x)}  \dl
  x}{\int_{\Omega} | f_{\lambda} (x) - \langle f_{\lambda} \rangle_{\Omega}
  |^{p (x)}  \dl x} & \leqslant \frac{(1 + \| \nabla \eta \|_{L^{\infty}
  (\Omega)})^{\mathsf{p} (1 + \alpha)}}{| 1 - \langle \eta \rangle_{\Omega}
  |^{\mathsf{p} (1 - \alpha)}} \cdot \frac{\int_{B_{1 + \alpha} \setminus B_1}
  \lambda^{p (x)}  \dl x}{\int_{B_{1 - \alpha}} \lambda^{p (x)}  \dl x}
  \nonumber\\
  & \leqslant  \frac{(1 + \| \nabla \eta \|_{L^{\infty}
  (\Omega)})^{\mathsf{p} (1 + \alpha)}}{| 1 - \langle \eta \rangle_{\Omega}
  |^{\mathsf{p} (1 - \alpha)}} \cdot \frac{| B_{1 + \alpha} \setminus B_1 |}{|
  B_{1 - \alpha} |} \cdot \frac{\lambda^{\mathsf{p}(1)}}{\lambda^{\mathsf{p} (1 - \alpha)}} . \nonumber
\end{align}
Since $\mathsf{p}(1)>\mathsf{p} (1 - \alpha)$, passing to the limit for $\lambda \rightarrow 0$, we conclude that for the
chosen open set $\Omega$ and radially symmetric and strictly decreasing variable exponent functions, the Rayleigh quotients associated with $f_\lambda$ converge to zero. This implies that the Poincar{\'e}--Wirtinger inequality in the form \eqref{eq:false} cannot hold.}
\end{example}}

One main ingredient for our Poincaré--Wirtinger-type inequality for bounded Lipschitz domains (cf. Theorem \ref{thm-pw}) will make use of the notion of starshaped domains \(D\subset \rn\) with respect to some subset \(S\subset D\). Therefore, in this section we properly introduce them and give some comments on an alternative definition which is, however, equivalent in the setting we will be concerned with. 

\renewcommand{\theequation}{A}
\begin{definition}[Starshaped domains with respect to a subset]\label{def-ss-i}
	A domain \(D\subset \rn\) is called starshaped with respect to a nonempty subset \(S\subset D\) if the following condition holds:
	 \begin{equation}\label{eq-ls}
	 [x,y]\subset D \quad \forall x\in S, \forall y\in D,
	 \end{equation}
	 where \([x,y]\) is the line segment with endpoints \(x\) and \(y\). 
\end{definition}

\renewcommand{\theequation}{\arabic{section}.\arabic{equation}}

We point out that this definition is quite general, since it could be given in any arbitrary vector space and in particular doesn't require any notion of topology. Nevertheless, in the literature also the subsequent definition can be found (we refer to \cite[Def. 2, p.20]{Maz_ya_1998}). 

\begin{definition}\label{def-ss-ii}
 	A domain \(D\subset \rn\) is called starshaped with respect to a nonempty subset \(S\subset D\) if any ray with origin in \(S\) has a unique common point with \(\p D\).
\end{definition}

We did not use a different name to refer to starshaped domains in the sense of Definitions \ref{def-ss-i} and \ref{def-ss-ii} because it turns out that for
the scope of this work we can use both definitions equivalently.
In fact, Definition \ref{def-ss-ii} always implies property (\ref{eq-ls}) and, therefore, a starshaped domain in the sense of Definition \ref{def-ss-ii}
is also starshaped according to  Definition \ref{def-ss-i}.
Viceversa, in the special case that %\(D\) is a domain and 
the subset \(S\subset D\) is open, then a starshaped domain in the sense of Definition \ref{def-ss-i}
is also starshaped according to  Definition \ref{def-ss-ii}.
However, this does not apply in general, making Definition \ref{def-ss-ii} the strongest one. We cover this discussion in detail in Appendix~\ref{aux:appendix} 
(compare Lemma \ref{lem-nc} and Lemma \ref{lem-sc}) along with some simple counterexamples in the general case. 

Hereafter, following the above comments, whenever we use the notion of starshaped sets with respect to a subset, we always refer to Definition \ref{def-ss-i}. 
This is also convenient because condition (\ref{eq-ls}) will prove as the natural assumption for our direct approach. 

Next, we turn to an interesting observation concerning the special geometry of Lipschitz domains. 
The statement can be found for example in \cite[Lemma 1, p.22]{Maz_ya_1998} but with a different notation. For convenience of the reader, we also prove the next result in the appendix~\ref{aux:appendix}.

\begin{proposition}\label{prop-union-ss}
	Any bounded Lipschitz domain \(\Om\subset \rn\) is the union of a finite number of domains that are all starshaped with respect to an open ball of the same radius. 
\end{proposition}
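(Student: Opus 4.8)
The plan is to prove Proposition~\ref{prop-union-ss} by exploiting the local structure of a Lipschitz boundary: near each boundary point $\Om$ coincides, after a rigid motion, with the subgraph of a Lipschitz function, and such a subgraph is (locally) starshaped with respect to a small cone, hence with respect to a small ball placed below the graph. First I would fix, for every $x_0\in\partial\Om$, a neighbourhood $U_{x_0}$, an orthonormal coordinate system, and a Lipschitz function $\varphi_{x_0}$ with constant $L$ such that $\Om\cap U_{x_0}=\{(y',y_n): y_n<\varphi_{x_0}(y')\}\cap U_{x_0}$. By shrinking $U_{x_0}$ to a cylinder $C_{x_0}=B'(x_0',r)\times(-h,h)$ with $h$ large compared to $L r$, one checks that $D_{x_0}:=\Om\cap C_{x_0}$ is starshaped with respect to a ball $B_{x_0}$ of radius $\rho_{x_0}>0$ sitting strictly below the graph inside the cylinder: any segment from a point of $B_{x_0}$ to a point of $D_{x_0}$ stays below the graph, by a direct computation using the slope bound $L$. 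This is the one genuinely geometric step, and it is the part I would write out carefully.

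Next I would cover $\partial\Om$, which is compact, by finitely many such cylinders $C_{x_1},\dots,C_{x_m}$, obtaining starshaped pieces $D_1,\dots,D_m$ with associated balls of radii $\rho_1,\dots,\rho_m$. The remaining part $K:=\Om\setminus(\cup_{i=1}^m C_{x_i})$ is a compact subset of the open set $\Om$, so it is covered by finitely many balls $B(z_1,s_1),\dots,B(z_\ell,s_\ell)$ contained in $\Om$; each such ball is convex, hence trivially starshaped with respect to itself (equivalently, with respect to a concentric ball of any smaller radius). Adjoining these balls to the $D_i$ gives a finite family of starshaped domains whose union is $\Om$. Finally, to get the same radius for all the balls, I set $R:=\min\{\rho_1,\dots,\rho_m,s_1,\dots,s_\ell\}>0$ and replace each ball by a concentric sub-ball of radius $R$; a domain starshaped with respect to a ball remains starshaped with respect to any concentric smaller ball, so this does not destroy the property, and $\Om$ is still covered.

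The main obstacle, and the place where one must be slightly careful, is ensuring that each local piece $D_{x_i}$ is genuinely a \emph{domain} (open and connected) that is starshaped in the sense of Definition~\ref{def-ss-i}, rather than merely "locally looking like a subgraph." Connectedness follows because, taking the cylinder thin enough, every point of $D_{x_i}$ can be joined by a vertical segment to a fixed point of the ball $B_{x_i}$ (which is where the slope estimate $h>Lr+\rho_{x_i}$ enters), and this simultaneously yields the starshapedness condition $[x,y]\subset D_{x_i}$ for $x\in B_{x_i}$, $y\in D_{x_i}$. One subtlety: a generic Lipschitz domain may require the cylinder's vertical direction to be adapted to the boundary patch, so the "below the graph" reasoning must be carried out in the local coordinates attached to $x_i$; since only finitely many patches are used, no uniformity issue arises. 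For completeness I would also note that this construction can be refined so that consecutive pieces overlap in sets of positive measure, as needed for Theorem~\ref{thm-pw}, by enlarging the cylinders slightly and using connectedness of $\Om$ to order the pieces into a chain — but the bare statement of Proposition~\ref{prop-union-ss} only requires the finite union, so I would relegate the chaining to the appendix discussion already announced in the text.
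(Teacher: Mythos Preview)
Your argument is correct, but it proceeds along a different route from the paper's. The paper first isolates the uniform cone condition as a separate lemma (Lemma~\ref{lem-cone-prop}): every $x\in\Om$ is the vertex of a congruent cone $\mathcal{C}_x\subset\Om$, and each such cone is already starshaped with respect to a ball of a fixed radius $R>0$. It then writes $\Om=\cup_{x\in X}\mathcal{C}_x$, covers the (compact) set of vertices $X$ by finitely many balls $B_{R/2}(x_i)$, and sets $\Om_i:=\cup_{x\in B_{R/2}(x_i)\cap X}\mathcal{C}_x$; since $B_{R/2}(x_i)\subset B_R(x)\subset\mathcal{C}_x$ for every $x$ in that bunch, each $\Om_i$ is starshaped with respect to $B_{R/2}(x_i)$.

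By contrast, you bypass the cone condition and work directly from the local graph representation: each boundary patch $D_{x_i}=\Om\cap C_{x_i}$ is itself the starshaped piece, and the interior is handled separately by balls. This is arguably more elementary and keeps the geometry of the boundary pieces explicit (subgraphs in cylinders), whereas the paper's starshaped sets $\Om_i$ are less concrete unions of cones. On the other hand, the cone route delivers a uniform radius from the outset and treats boundary and interior points on the same footing, so no separate ``cover the leftover compact set by balls'' step is needed. Both approaches ultimately rest on the same geometric fact---the Lipschitz slope bound forces segments from a deep ball to stay inside $\Om$---but package it differently.
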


Given a bounded Lipschitz domain $\Om$, Proposition~\ref{prop-union-ss} enables us to prove the Poincaré-Wirtinger-type inequality on each of the starshaped-with-respect-to-a-ball sets forming $\Om$ and then piece together these inequalities. 
The whole argument is detailed in  Section~\ref{sec:mainsec}.

%%%%%%%%%%%%%%%%%%%%%%%%%%%%%%%%%%%%%%%%%%
\section{Proof of Theorem~\ref{thm-pw}} \label{sec:mainsec}
This section is dedicated to the proof of our main theorem. We split it into three steps.

Although for some of the intermediate results we prove it is sufficient to assume that the variable exponent $p: \Om \to [1,\infty]$ is just a measurable function, in what follows we always assume that $p\in L^\infty(\Om)$, and we set $p_+:=\norm{p}{L^\infty(\Om)}$.

The constant \(\omega_n\) will always denote the measure of the unit ball in \(\R^n\). 
We will write \(B_r\) for the open ball with radius \(r> 0\) that is centred at \(0\) and \(B_r(z)\) for the one centred at  \(z\in \rn\). 
Also, we will use Greek letters \(\alp,\beta,\kappa\) to keep track of the constants that appear along the way.

\subsection{Step 1.} \label{sec:step1}
The following lemma provides the crucial estimate, a Morrey-type inequality, for bounded domains which are starshaped with respect to a subset.  

\begin{lemma}\label{lem-morrey-ss}
	Let \(D\subset \rn\) be a bounded domain starshaped with respect to an open subset \(S\subset D\), and let \(f\in C^\infty(\overline{D})\).
	\begin{enumerate}[(i)]
		\item
			For every \(x\in S\) there holds
			\begin{equation}\label{eq-mor1}
				\int_D |f(x)-f(y)|^{p(x)}\ {\dl y}
				 \leqslant \frac{\diam(D)^{n+p(x)-1}}{n+p(x)-1} \int_D \frac{|\nabla f(z)|^{p(x)}}{|z-x|^{n-1}}\ {\dl z}.
			\end{equation} 
		\item
			For every \(x\in D\backslash S\) there holds
			\begin{equation}\label{eq-mor2}
			\int_S|f(x)-f(y)|^{p(x)}\ {\dl y}
			 \leqslant \frac{\diam(D)^{n+p(x)-1}}{n+p(x)-1} \int_D \frac{|\nabla f(z)|^{p(x)}}{|z-x|^{n-1}}\ {\dl z}.
			\end{equation} 
	\end{enumerate}
\end{lemma}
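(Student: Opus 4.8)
The plan is to adapt the classical proof of the Morrey/Sobolev pointwise estimate (the one underlying the $L^1$-Poincaré inequality on starshaped domains) to the variable-exponent modular setting, carefully tracking how the exponent $p(x)$ — which is \emph{frozen} at the point $x$ throughout the computation — interacts with the integration in $y$ and $z$. The key observation is that in both \eqref{eq-mor1} and \eqref{eq-mor2} the exponent appearing everywhere is $p(x)$, a \emph{constant} for the purposes of the inner estimates, so no genuinely new phenomenon of variable exponents enters at this stage; the variable-exponent subtlety is deferred to Steps 2 and 3 where these pointwise-in-$x$ estimates get integrated in $x$.

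First I would prove (i). Fix $x \in S$. For any $y \in D$, starshapedness with respect to $S$ (Definition \ref{def-ss-i}) guarantees $[x,y]\subset D$, so by the Fundamental Theorem of Calculus along the segment,
\begin{equation*}
f(x)-f(y) = -\int_0^1 \nabla f\big(x + t(y-x)\big)\cdot (y-x)\,\dl t,
\end{equation*}
hence $|f(x)-f(y)| \leqslant \int_0^1 |\nabla f(x+t(y-x))|\,|y-x|\,\dl t$. Raising to the power $p(x)$ and using Jensen's inequality (convexity of $s\mapsto s^{p(x)}$, valid since $p(x)\geqslant 1$) on the probability measure $\dl t$ on $[0,1]$,
\begin{equation*}
|f(x)-f(y)|^{p(x)} \leqslant |y-x|^{p(x)}\int_0^1 \big|\nabla f(x+t(y-x))\big|^{p(x)}\,\dl t.
\end{equation*}
Now integrate in $y$ over $D$, pass to polar coordinates centered at $x$ writing $y = x + r\omega$ with $\omega\in\sn$ and $0\leqslant r \leqslant \diam(D)$ (since $D\subset B_{\diam(D)}(x)$), and in the inner $t$-integral substitute $z = x + tr\omega$, i.e. $s = tr$ with $\dl z$-type bookkeeping $z = x + s\omega$, $\dl s\, \dl\omega$ against $\dl z = \rho^{n-1}\dl\rho\,\dl\omega$. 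Extending the $r$-integration up to $\diam(D)$ and the $z$-integration from $D$ to all relevant points, one collects the factor $\int_0^{\diam(D)} r^{n+p(x)-1}\,\frac{\dl r}{r^{?}}$ — the precise exponent bookkeeping being exactly the routine Morrey computation — which yields $\tfrac{\diam(D)^{n+p(x)-1}}{n+p(x)-1}$ as the constant and leaves $\int_D \frac{|\nabla f(z)|^{p(x)}}{|z-x|^{n-1}}\,\dl z$. This is the standard manipulation and I would not belabor the change-of-variables arithmetic.

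For (ii), fix $x \in D\setminus S$ and $y\in S$. The difficulty is that starshapedness gives segments from points \emph{of} $S$, not \emph{to} points of $S$ from outside — but $[y,x] = [x,y]$ is the same segment, and since $y\in S$ and $x\in D$, Definition \ref{def-ss-i} still gives $[x,y]\subset D$. So literally the same FTC-along-$[x,y]$ argument applies verbatim, now integrating in $y$ over $S\subseteq D$ (which only shrinks the left side) while the $z$-integral still runs over all of $D$; the constant is unchanged. The main ``obstacle'' is therefore purely bookkeeping: making the polar-coordinate change of variables and the extension of domains rigorous while keeping the constant exactly $\tfrac{\diam(D)^{n+p(x)-1}}{n+p(x)-1}$, and being careful that the convexity/Jensen step needs $p(x)\geqslant 1$ (true by hypothesis, and the only place measurability of $p$ — indeed just pointwise finiteness of $p(x)$ — is used). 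No compactness, no Lipschitz structure, and no variable-exponent machinery is needed here; those enter only when assembling the global inequality in the subsequent steps.
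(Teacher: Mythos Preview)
Your proposal is correct and follows essentially the same route as the paper: FTC along the segment $[x,y]$, Jensen with the frozen exponent $p(x)$, polar coordinates about $x$, and the starshapedness-based extension (in the paper written as the characteristic-function inequality $\chi_D(x+r\sigma)\leqslant \chi_D(x+t\sigma)$ for $t\leqslant r$, resp.\ $\chi_S(x+r\sigma)\leqslant \chi_D(x+t\sigma)$ in (ii)) to decouple the radial integral and produce the Riesz-type kernel and the constant $\diam(D)^{n+p(x)-1}/(n+p(x)-1)$. Your reading of (ii) is also the paper's: the segment $[x,y]$ lies in $D$ because $y\in S$, so the identical computation goes through with the $y$-integration restricted to $S$.
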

\Remark{Note that although we have the same upper bounds in \eqref{eq-mor1} and \eqref{eq-mor2}, they hold 
for different ranges of the variable $x$. Also, note that the domains of integration on the left-hand sides of \eqref{eq-mor1} and \eqref{eq-mor2} are different.}

\begin{proof}[Proof of Lemma \ref{lem-morrey-ss}]
	{\it (i).}
	Let \(x\in S\) and \(y\in D\).
	We start by noting that for any \(f\in C^\infty(\overline D)\) by the fundamental theorem of calculus there holds
	\begin{equation*}
		f(x)-f(y)
		=-\int_0^1 \nabla f(x+t(y-x))\cdot (y-x)\ {\dl t}.
	\end{equation*}
	Therefore, with the notation \(w(z)=\frac{z}{|z|}\) and by Jensen's inequality, it follows 
	\begin{align*}
		|f(x)-f(y)|^{p(x)}
		& \leqslant \left( \int_0^1 |\nabla f(x+t(y-x))||y-x|\ {\dl t}\right)^{p(x)}\\
		& \leqslant |y-x|^{p(x)}\int_0^1 |\nabla f(x+t(y-x))|^{p(x)}\ {\dl t}\\
		&= |y-x|^{{p(x)}-1}\int_0^{|y-x|} |\nabla f(x+tw(y-x))|^{p(x)}\ {\dl t}.
	\end{align*}
	Integrating the previous relation over \(y\in D\), which is allowed since by assumption \(D\) is starshaped with respect to \(S\), we infer that 
	\begin{onealign}\label{eq-aux1}
		\int_D |f(x)-f(y)|^{{p(x)}}\ {\dl y}
		& \leqslant \int_D |y-x|^{{p(x)}-1}\int_0^{|y-x|} |\nabla f(x+tw(y-x))|^{p(x)}\ {\dl t}{\dl y}\\
		&= \int_\rn |y-x|^{{p(x)}-1}\chi_D(y)\int_0^{|y-x|} |\nabla f(x+tw(y-x))|^{p(x)}\ {\dl t}{\dl y}\\
		&= \int_\rn |y|^{{p(x)}-1}\chi_D(x+y)\int_0^{|y|} |\nabla f(x+tw(y))|^{p(x)}\ {\dl t}{\dl y},
	\end{onealign}
	where \(\chi_D\) is the characteristic function associated with \(D\) that equals \(1\) everywhere inside of \(D\) and vanishes outside of it. 
	Next, we recall that for any function \(u\in L^1(\rn)\), the formula for integration in spherical coordinates yields
        \begin{equation}\label{eq-onion2}
		\int_{\R^n} u(z)\ {\dl z}
		= \int_{0}^\infty r^{n-1}  \int_{\sn}u(r\sigma)\ {\dl \sigma} {\dl r}.
	\end{equation}
	Combining (\ref{eq-aux1}) and (\ref{eq-onion2}) we obtain the estimate
	\begin{equation}\label{eq-aux2}
		\int_D |f(x)-f(y)|^{{p(x)}}\ {\dl y}
		 \leqslant \int_{0}^\infty r^{n+p(x)-2} \int_{\sn}\chi_D(x+r\sigma)\int_0^{r} |\nabla f(x+t\sigma)|^{p(x)}\ {\dl t}{\dl \sigma} {\dl r}.
	\end{equation}
	%The following observation is now a crucial one. 
 Since \(D\) is starshaped with respect to \(S\) and \(x\in S\), we have that for every $r>0$ the following implication holds
	\begin{equation*}
		x+r\sigma\in D
		\implies
		x+t\sigma \in D
		\quad \forall t\in [0,r], \forall \sigma \in \sn \,.
	\end{equation*}
	The previous relation, in terms of characteristic functions, reads as
	\begin{equation}\label{ew-crucial-obs}
		\chi_D(x+r\sigma)
		 \leqslant \chi_D(x+t\sigma)
		\quad  \forall t\in [0,r], \forall \sigma \in \sn.
	\end{equation}
	Hence, it follows from (\ref{eq-aux2}) that 
	\begin{align*}
	\int_D |f(x)-f(y)|^{{p(x)}}\ {\dl y}
	& \leqslant \int_{0}^\infty r^{n+{p(x)}-2}\int_{\sn}\int_0^{r} \chi_D(x+t\sigma)|\nabla f(x+t\sigma)|^{p(x)}\ {\dl t}{\dl \sigma} {\dl r}\\	
	&= \int_{0}^\infty r^{n+{p(x)}-2}\int_0^{r}t^{n-1}\int_{\sn} \chi_D(x+t\sigma)\frac{|\nabla f(x+t\sigma)|^{p(x)}}{|t\sigma|^{n-1}}\ {\dl \sigma} {\dl t}{\dl r}\\
	&= \int_{0}^\infty r^{n+{p(x)}-2}\int_{B_r} \chi_D(x+z)\frac{|\nabla f(x+z)|^{p(x)}}{|z|^{n-1}}\ {\dl z} {\dl r}\\
	&= \int_{0}^\infty r^{n+{p(x)}-2}\int_{B_r(x)\cap D} \frac{|\nabla f(z)|^{p(x)}}{|z-x|^{n-1}}\ {\dl z} {\dl r}.
	\end{align*}
	Since \(x\in D\), the latter term is bounded by
	\begin{equation*}
		\int_{0}^\infty r^{n+{p(x)}-2}\int_{B_r(x)\cap D} \frac{|\nabla f(z)|^{p(x)}}{|z-x|^{n-1}}\ {\dl z} {\dl r}
		 \leqslant \left( \int_{0}^{\diam(D)} r^{n+{p(x)}-2}\ {\dl r}\right)\int_{D}  \frac{|\nabla f(z)|^{p(x)}}{|z-x|^{n-1}}\ {\dl z} .
	\end{equation*}
	Explicitly computing the first integral on the right-hand side completes the proof of (i).
	
	{\it (ii).}
	Since the set \(D\) is starshaped with respect to \(S\) we have that
	\begin{equation*}
		[x,y]\in D\quad \forall x\in D\backslash S, \forall y\in S.
	\end{equation*} 
	Thus, arguing as in the proof of \ref{eq-aux2}, we infer that for every \(x\in D\backslash S\) the following inequality  holds
	\begin{equation}\label{eq-aux3}
		\int_S |f(x)-f(y)|^{{p(x)}}\ {\dl y}
		 \leqslant \int_{0}^\infty r^{n-1}r^{{p(x)}-1} \int_{\sn}\chi_S(x+r\sigma)\int_0^{r} |\nabla f(x+t\sigma)|^{p(x)}\ {\dl t}{\dl \sigma} {\dl r}.
	\end{equation}
	Now, again, for every $r>0$ we have
	\begin{equation*}
		x+r\sigma\in S
		\implies
		x+t\sigma \in D
		\quad \forall t\in [0,r], \forall \sigma \in \sn.
	\end{equation*}
	The previous relation, in terms of characteristic functions, reads as
	\begin{equation}\label{ew-crucial-ob2}
		\chi_S(x+r\sigma)
		 \leqslant \chi_D(x+t\sigma)
		\quad  \forall t\in [0,r], \forall \sigma \in \sn.
	\end{equation}
	We can therewith further estimate (\ref{eq-aux3}) by
	\begin{equation*}
		\int_S |f(x)-f(y)|^{{p(x)}}\ {\dl y}
		 \leqslant \int_{0}^\infty r^{n+{p(x)}-2}\int_{\sn}\int_0^{r} t^{n-1}\chi_D(x+t\sigma)\frac{|\nabla f(x+t\sigma)|^{p(x)}}{|t\sigma|^{n-1}}\ {\dl t}{\dl \sigma} {\dl r} \, ,
	\end{equation*}
	and the claimed inequality follows arguing as in the proof of \eqref{eq-mor1}.
\end{proof}

 Observing that any convex domain is starshaped with respect to itself, we can already prove a modular Poincaré--Wirtinger-type inequality
 for variable exponents and bounded convex domains.

\begin{corollary}\label{cor-pw-convex}
	Let \(\Om\subset \rn\) be a convex, bounded domain. Then for any \(f\in C^\infty(\overline{\Om})\) there holds
	\begin{equation}\label{eq-pw-convex}
	\int_\Om \big|f(x)-\mv{f}{\Om}\big|^{p(x)} \ {\dl x}
	 \leqslant \frac{1}{|\Om|} \int_\Om \frac{\diam(\Om)^{n+p(x)-1}}{n+p(x)-1}  \int_\Om \frac{|\nabla f(z)|^{p(x)}}{|z-x|^{n-1}}\ {\dl z}{\dl x}.
	\end{equation}
\end{corollary}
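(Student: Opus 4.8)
The plan is short, since this is essentially a one-line consequence of Lemma~\ref{lem-morrey-ss}. First I would observe that a convex bounded domain $\Om$ is starshaped with respect to \emph{itself}, i.e.\ one may take $S=D=\Om$ in the hypotheses of Lemma~\ref{lem-morrey-ss}: indeed, condition \eqref{eq-ls} reads $[x,y]\subset\Om$ for all $x,y\in\Om$, which is precisely convexity. With this choice, part~(ii) of the lemma is vacuous, and part~(i) applies for \emph{every} $x\in\Om$, giving
\begin{equation*}
\int_\Om |f(x)-f(y)|^{p(x)}\ {\dl y} \leqslant \frac{\diam(\Om)^{n+p(x)-1}}{n+p(x)-1}\int_\Om \frac{|\nabla f(z)|^{p(x)}}{|z-x|^{n-1}}\ {\dl z}\qquad\forall x\in\Om.
\end{equation*}

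Next I would pass from the left-hand side to the deviation from the mean. Since $\tfrac{1}{|\Om|}{\dl y}$ is a probability measure on $\Om$ and the function $t\mapsto|t|^{p(x)}$ is convex for $p(x)\geqslant 1$, Jensen's inequality yields, for each fixed $x\in\Om$,
\begin{equation*}
\big|f(x)-\mv{f}{\Om}\big|^{p(x)} = \left|\frac{1}{|\Om|}\int_\Om\big(f(x)-f(y)\big)\ {\dl y}\right|^{p(x)} \leqslant \frac{1}{|\Om|}\int_\Om |f(x)-f(y)|^{p(x)}\ {\dl y}.
\end{equation*}
Chaining this with the previous display gives a pointwise (in $x$) bound
\begin{equation*}
\big|f(x)-\mv{f}{\Om}\big|^{p(x)} \leqslant \frac{1}{|\Om|}\,\frac{\diam(\Om)^{n+p(x)-1}}{n+p(x)-1}\int_\Om \frac{|\nabla f(z)|^{p(x)}}{|z-x|^{n-1}}\ {\dl z}.
\end{equation*}

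Integrating this inequality over $x\in\Om$ produces exactly \eqref{eq-pw-convex}, which finishes the proof. I do not expect any real obstacle here: the only two points deserving a moment's care are the verification that convexity is the right form of the starshapedness hypothesis (so that Lemma~\ref{lem-morrey-ss}(i) covers all of $\Om$), and the remark that the standing assumption $p\geqslant 1$ is precisely what makes $t\mapsto|t|^{p(x)}$ convex, so that Jensen applies with the normalized measure $\tfrac{1}{|\Om|}{\dl y}$. No compactness, no decomposition into starshaped pieces, and no control of constants beyond what already appears in Lemma~\ref{lem-morrey-ss} is needed.
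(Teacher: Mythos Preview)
Your proposal is correct and essentially identical to the paper's proof: both observe that convexity means $\Om$ is starshaped with respect to itself so that Lemma~\ref{lem-morrey-ss}(i) applies for every $x\in\Om$, then combine this with Jensen's inequality for the convex map $t\mapsto|t|^{p(x)}$ against the probability measure $\tfrac{1}{|\Om|}\dl y$, and integrate in $x$. The only cosmetic difference is the order in which Jensen and the lemma are invoked.
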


\begin{proof}
Let \(\Om\subset\rn\) be a convex domain. Then, by definition, for any two points \(x,y\in \Om\) the line segment \([x,y]\) is contained in \(\Om\). 
Following Definition \ref{def-ss-i} this immediately yields that \(\Om\) is starshaped with respect to itself. 
Jensen's inequality (recalling that \(p\ge1\) by assumption) and the previous Lemma \ref{lem-morrey-ss} allow us to infer that 
	\begin{align*}
		\int_\Om \big|f(x)-\mv{f}{\Om}\big|^{p(x)} \ {\dl x}
		&=\int_\Om \bigg|\frac{1}{|\Om|}\int_\Om f(x)- f(y)\ {\dl y}\bigg|^{p(x)} \ {\dl x}\\
		& \leqslant \frac{1}{|\Om|}\int_\Om \int_\Om\big| f(x)- f(y)\big|^{p(x)} \ {\dl x}{\dl y}\\
		& \leqslant\frac{1}{|\Om|} \int_\Om\left(\int_0^{\diam(\Om)} r^{n+p(x)-2}\ {\dl r} \right) \int_\Om \frac{|\nabla f(z)|^{p(x)}}{|z-x|^{n-1}}\ {\dl z}{\dl x}.
	\end{align*}
	This completes the proof.
\end{proof}

If the domain is not convex but still starshaped with respect to some subset, we can deduce a similar estimate. 

\begin{lemma}\label{lem-pw-ss}
Let \(D\subset \rn\) open and bounded. If \(D\) is starshaped with respect to an open subset \(S\subset D\), then for any \(f\in C^\infty(\overline{D})\) the following inequality holds
	\begin{equation}\label{eq-pw-ss}
		\int_{D}\int_D |f(x)-f(y)|^{p(x)} \ {\dl y}{\dl x}
		 \leqslant \alp \int_D\frac{\diam(D)^{n+p(x)-1}}{n+p(x)-1}   \int_D \frac{|\nabla f(z)|^{p(x)}}{|z-x|^{n-1}}\ {\dl z}{\dl x},
	\end{equation}
	where the constant \(\alp>0\) depends only on \(\tilde{p}_+:=\norm{p}{L^\infty(D)}\) and on the sets \(S\) and \(D\). The constant $\alpha$ can be taken as (compare with (\ref{eq-constant-alpha}))
	\begin{onealign}
		\alp(\tilde{p}_+,S,D)
		&:= 2^{\tilde{p}_+}\frac{\kappa(D)}{|S|}.
	\end{onealign}
	with $\kappa(D):=(n+1)\omega_n \diam(D)^n$.
\end{lemma}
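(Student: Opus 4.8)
The plan is to split the outer integral in \eqref{eq-pw-ss} over $x$ according to whether $x$ belongs to the starshapedness core $S$ or not, and to treat the two regions with the two parts of Lemma~\ref{lem-morrey-ss}. On $\{x\in S\}$ nothing new is needed: Lemma~\ref{lem-morrey-ss}(i), applied with base point $x$ and exponent $p(x)$, gives at once
\[
\int_S\!\int_D |f(x)-f(y)|^{p(x)}\,\dl y\,\dl x
\;\leqslant\; \int_S \frac{\diam(D)^{n+p(x)-1}}{n+p(x)-1}\int_D \frac{|\nabla f(z)|^{p(x)}}{|z-x|^{n-1}}\,\dl z\,\dl x .
\]

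The real work is on $\{x\in D\setminus S\}$, where the segment $[x,y]$ need not lie in $D$ and part~(i) is unavailable. There I would bridge through the core: for $x\in D\setminus S$, $y\in D$ and any $\xi\in S$, the triangle inequality and the convexity estimate $(a+b)^q\leqslant 2^{q-1}(a^q+b^q)$ (valid for $q\geqslant1$) give
\[
|f(x)-f(y)|^{p(x)}\;\leqslant\;2^{p(x)-1}\big(|f(x)-f(\xi)|^{p(x)}+|f(\xi)-f(y)|^{p(x)}\big),
\]
and averaging this over $\xi\in S$ (dividing by $|S|$) and integrating in $(x,y)\in(D\setminus S)\times D$ splits the left-hand side into two pieces. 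In the first, $\int_D\dl y=|D|$ factors out and the remainder $\int_S|f(x)-f(\xi)|^{p(x)}\,\dl\xi$ with $x\in D\setminus S$ is exactly the object estimated by Lemma~\ref{lem-morrey-ss}(ii) (base point $x$, exponent $p(x)$). In the second, after exchanging the $y$- and $\xi$-integrations by Fubini, the inner integral $\int_D|f(\xi)-f(y)|^{p(x)}\,\dl y$ with $\xi\in S$ is controlled by Lemma~\ref{lem-morrey-ss}(i); the point to keep in mind is that parts~(i)--(ii) were proved for an arbitrary exponent $\geqslant1$ unrelated to the base point, so they may legitimately be invoked with the ``foreign'' exponent $p(x)$.

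The last discrepancy is that this second piece emerges with the kernel $|z-\xi|^{1-n}$, $\xi$ ranging over $S$, instead of the target kernel $|z-x|^{1-n}$. I would close this gap with two elementary bounds: since $S\subset D\subset B_{\diam(D)}(z)$, one has $\int_S|z-\xi|^{1-n}\,\dl\xi\leqslant\int_{B_{\diam(D)}}|w|^{1-n}\,\dl w=n\omega_n\diam(D)$; and since $|z-x|\leqslant\diam(D)$, we may insert $1\leqslant\diam(D)^{n-1}|z-x|^{1-n}$ to recover the desired kernel, at the cost of a factor $\diam(D)^{n-1}$. Putting the two pieces together, the $\{x\in D\setminus S\}$ contribution is bounded by $\frac{2^{p(x)-1}}{|S|}\big(|D|+n\omega_n\diam(D)^n\big)$ times the right-hand side of \eqref{eq-pw-ss} restricted to $x\in D\setminus S$; then $|D|\leqslant\omega_n\diam(D)^n$ gives $|D|+n\omega_n\diam(D)^n\leqslant(n+1)\omega_n\diam(D)^n=\kappa(D)$, and $2^{p(x)-1}\leqslant2^{\tilde{p}_+}$. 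Finally, since $\kappa(D)/|S|\geqslant1$ (because $|S|\leqslant|D|\leqslant\omega_n\diam(D)^n\leqslant\kappa(D)$), the $\{x\in S\}$ part displayed above is absorbed into the same constant, yielding \eqref{eq-pw-ss} with $\alp=2^{\tilde{p}_+}\kappa(D)/|S|$. I expect the only genuinely delicate steps to be the exponent/base-point bookkeeping in the bridging inequality and the conversion of the kernel $|z-\xi|^{1-n}$ into $|z-x|^{1-n}$; the rest is Fubini and arithmetic.
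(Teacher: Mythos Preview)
Your proposal is correct and follows essentially the same strategy as the paper: bridge through the core $S$ via the convexity inequality, apply both parts of Lemma~\ref{lem-morrey-ss}, and convert the auxiliary kernel $|z-\xi|^{1-n}$ into the target kernel $|z-x|^{1-n}$ using $|z-x|\leqslant\diam(D)$. The only difference is organizational---you split the $x$-integral over $S$ versus $D\setminus S$ \emph{before} bridging (handling $x\in S$ directly by Lemma~\ref{lem-morrey-ss}(i) with no bridge), whereas the paper bridges first for all $x\in D$ and splits the resulting $\int_D\int_S|f(x)-f(z)|^{p(x)}\,\dl z\,\dl x$ afterward; the two routes are equivalent and lead to the same constant up to a harmless factor of~$2$.
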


\begin{proof}
	By Definition \ref{def-ss-i}, for any \(x,y\in D\) and any \(z\in S\) the line segments \([x,z]\) and \([z,y]\) are completely contained in \(D\). 
	This motivates the following steps. Through the convexity inequality $(a+b)^p  \leqslant 2^{p-1}(a^p+b^p)$, valid for any $a,b \geqslant 0$ and $p \geqslant 1$,
	% \begin{equation}
	% 	(a+b)^p
	% 	 \leqslant 2^{p-1}(a^p+b^p)
	% 	\quad \text{for any } a,b \geqslant 0\text{ and } p \geqslant 1
	% \end{equation} 
	we estimate the left-hand side of (\ref{eq-pw-ss}) observing that for any \(z\in \R^n\) there holds
	\begin{align*}
		&\int_{D}\int_D |f(x)-f(y)|^{p(x)} \ {\dl y}{\dl x}\\
		& \leqslant \int_{D}\int_D 2^{p(x)-1}|f(x)-f(z)|^{p(x)} \ {\dl y}{\dl x} + \int_{D}\int_D 2^{p(x)-1}|f(z)-f(y)|^{p(x)} \ {\dl y}{\dl x}\\
		& \leqslant  |D|\int_{D} 2^{p(x)-1}|f(x)-f(z)|^{p(x)} \ {\dl x} 
		+ \int_{D}2^{p(x)-1}\int_D |f(z)-f(y)|^{p(x)} \ {\dl y}{\dl x}.
	\end{align*}
	Integrating over \(z\in S\) and pulling out the constant \(M(\tilde{p}_+):=2^{\tilde{p}_+-1}\),  yields
	\begin{onealign}\label{eq-first-split}
		&|S|\int_{D}\int_D |f(x)-f(y)|^{p(x)} \ {\dl x}{\dl y}\\
		& \leqslant M(\tilde{p}_+)\left[ |D|\int_{D}\int_S |f(x)-f(z)|^{p(x)} \ {\dl z} {\dl x}
		+ \int_{D} \int_S\int_D|f(z)-f(y)|^{p(x)} \ {\dl y} {\dl z} {\dl x}\right] .
	\end{onealign}
	We further split the first integral on the right-hand side into the two parts
	\begin{onealign}\label{eq-second-split}
		&\int_{D}\int_S |f(x)-f(z)|^{p(x)} \ {\dl z} {\dl x}\\
		&= \int_{S}\int_S|f(x)-f(z)|^{p(x)} \ {\dl z} {\dl x}
		+ \int_{D\backslash S}\int_S |f(x)-f(z)|^{p(x)} \ {\dl z} {\dl x}.
	\end{onealign}
	Since the set \(S\) is starshaped with respect to itself, it is in particular convex, so that by Lemma \ref{lem-morrey-ss} 
	(i) we bound the first contribution in (\ref{eq-second-split}) as
	\begin{onealign}\label{eq-first-part1}
		\int_S\int_S|f(x)-f(z)|^{p(x)} \ {\dl z} {\dl x}
		& \leqslant \int_S \frac{\diam(D)^{n+p(x)-1}}{n+p(x)-1} \int_S \frac{|\nabla f(z)|^{p(x)}}{|z-x|^{n-1}}\ {\dl z}\\
		& \leqslant \int_D \frac{\diam(D)^{n+p(x)-1}}{n+p(x)-1} \int_D \frac{|\nabla f(z)|^{p(x)}}{|z-x|^{n-1}}\ {\dl z}.
	\end{onealign}
	The second contribution of (\ref{eq-second-split}), in turn, is estimated with the help of Lemma \ref{lem-morrey-ss} (ii) to similarly get
	\begin{onealign}\label{eq-first-part2}
		\int_{D\backslash S}\int_S |f(x)-f(z)|^{p(x)} \ {\dl z} {\dl x}
		& \leqslant\int_{D\backslash S}\frac{\diam(D)^{n+p(x)-1}}{n+p(x)-1} \int_D \frac{|\nabla f(w)|^{p(x)}}{|w-x|^{n-1}}\ {\dl w} {\dl x}\\
		& \leqslant \int_{D}\frac{\diam(D)^{n+p(x)-1}}{n+p(x)-1} \int_D \frac{|\nabla f(w)|^{p(x)}}{|w-x|^{n-1}}\ {\dl w} {\dl x}.
	\end{onealign}
	We now turn to the second integral in (\ref{eq-first-split}). We again invoke Lemma \ref{lem-morrey-ss} (i) to get the estimate
	\begin{onealign}\label{eq-second-part-pre}
		\int_{D} \int_S\int_D|f(z)-f(y)|^{p(x)} \ {\dl y} {\dl z} {\dl x}
		& \leqslant \int_D \frac{\diam(D)^{n+p(x)-1}}{n+p(x)-1}\int_S \int_D \frac{|\nabla f(w)|^{p(x)}}{|w-z|^{n-1}}\ {\dl w} {\dl z} {\dl x}\\
        & \leqslant \int_D \frac{\diam(D)^{n+p(x)-1}}{n+p(x)-1}\int_D \int_D \frac{|\nabla f(w)|^{p(x)}}{|w-z|^{n-1}}\ {\dl w} {\dl z} {\dl x}.
	\end{onealign}

We underline the correct use of Lemma~\ref{lem-morrey-ss}~(i): in fact, the double
integral in $\dl y \dl z$ on the left-hand side of \eqref{eq-second-part-pre} is like the left-hand side of \eqref{eq-mor1} provided that one considers $p(x)$ in \eqref{eq-second-part-pre} as a constant
function of the variable $z$.	
	
	%We underline that the exponent in the integral on the left-hand side of Lemma~\ref{lem-morrey-ss}~(i) appears as a function of $z$. While it may not be evident at first glance, this is still the case here, provided one defines $p(z):=p(x)$ for a fixed $x \in D$. Thus, in this instance, replacing $p(z)$ with $p(x)$ does not affect the correct use of Lemma~\ref{lem-morrey-ss}~(i).
	
     From the fact that \(D\) is bounded we infer
	\begin{onealign}\label{eq-swap-vars}
		\int_D \int_D \frac{|\nabla f(w)|^{p(x)}}{|w-z|^{n-1}}\ {\dl w} {\dl z}
		 &=\int_D |w-x|^{n-1}\frac{|\nabla f(w)|^{p(x)}}{|w-x|^{n-1}}\int_D\frac{1}{|w-z|^{n-1}}\ {\dl z}{\dl w}\\
		 & \leqslant \diam(D)^{n-1}\bigg[\sup_{w\in D}\int_D\frac{1}{|w-z|^{n-1}}\ {\dl z}\bigg]
		 \int_D \frac{|\nabla f(w)|^{p(x)}}{|w-x|^{n-1}}\ {\dl w}.
	\end{onealign}
        In view of Lemma \ref{lem-aux-int-bounds} and the estimate \(|D| \leqslant |B_{\diam(D)}|=\omega_n \diam(D)^n\) we obtain the upper bound
        \begin{align*}
            \diam(D)^{n-1}\bigg[\sup_{w\in D}\int_D\frac{1}{|w-z|^{n-1}}\ {\dl z}\bigg]
            & \leqslant (n+1)\diam(D)^{n-1}\omega_n^{1-\frac{1}{n}}|D|^{\frac{1}{n}}\\
            & \leqslant (n+1)\omega_n \diam(D)^n.
        \end{align*}
        Hence, with the definition 
        \begin{equation}\label{eq-constant-kappa}
		\KDN
		:= (n+1)\omega_n \diam(D)^n,
	\end{equation}
        we get from (\ref{eq-swap-vars}) the estimate
        \begin{equation}
            \int_S \int_D \frac{|\nabla f(w)|^{p(x)}}{|w-z|^{n-1}}\ {\dl w} {\dl z}
             \leqslant \KDN\int_D \frac{|\nabla f(w)|^{p(x)}}{|w-x|^{n-1}}\ {\dl w}.
        \end{equation}
	Plugging this into (\ref{eq-second-part-pre}) yields 
	\begin{onealign}\label{eq-second-part}
		&\int_{D} \int_S\int_D|f(z)-f(y)|^{p(x)} \ {\dl y} {\dl z} {\dl x}\\
		& \leqslant \KDN\int_D \frac{\diam(D)^{n+p(x)-1}}{n+p(x)-1}
		\int_D \frac{|\nabla f(w)|^{p(x)}}{|w-x|^{n-1}}\ {\dl w}{\dl x}.
	\end{onealign}
	Finally, by combining (\ref{eq-first-split}) with (\ref{eq-first-part1}), (\ref{eq-first-part2}) and (\ref{eq-second-part}) we conclude that 
	\begin{align*}
		&\int_{D}\int_D |f(x)-f(y)|^{p(x)} \ {\dl y}{\dl x}\\
            & \leqslant \frac{M(\tilde{p}_+)}{|S|}\left( |D| + \KDN\right) \int_D\frac{\diam(D)^{n+p(x)-1}}{n+p(x)-1}
		\int_D \frac{|\nabla f(z)|^{p(x)}}{|z-x|^{n-1}}\ {\dl z}{\dl x}\\
		& \leqslant \alp(\tilde{p}_+,S,D)\int_D \frac{\diam(D)^{n+p(x)-1}}{n+p(x)-1}
		\int_D \frac{|\nabla f(z)|^{p(x)}}{|z-x|^{n-1}}\ {\dl z}{\dl x},
	\end{align*}
	with 
	\begin{onealign}\label{eq-constant-alpha}
		\alp(\tilde{p}_+,S,D)
		&:= 2^{\tilde{p}_+}\frac{\kappa(D)}{|S|}.
	\end{onealign}
 
        In writing the previous estimate we observed that \(|D| \leqslant \kappa(D)\).
\end{proof}

\subsection{Step 2.} \label{sec:step2}
The following auxiliary result will be used to lift inequality (\ref{eq-pw-ss}) to finite unions of bounded domains which are all starshaped with respect to some subsets and  have non-empty intersections.

\begin{lemma}\label{lem-pw-union}
    Let  \(\Om\subset \rn\) be a bounded domain. Let \(D_1, D_2\subset \Om\) be two bounded domains that are starshaped with respect to the open subsets \(S_i\subset D_i\), \(i=1,2\). We further assume that \(D_1\cap D_2\ne \emptyset\). Then, for every \(f\in C^\infty(\overline{\Om})\) there holds:
	\begin{enumerate}[(i)]
		\item
			\begin{equation*}\label{eq-pw-union-i}
				\int_{D_1\times D_2} \big|f(x)-f(y)\big|^{p(x)} \ {\dl y}{\dl x}
				 \leqslant \beta
				\int_{\Om} \frac{\diam(\Om)^{n+p(x)-1}}{n+p(x)-1} \int_{\Om} \frac{|\nabla f(z)|^{p(x)}}{|z-x|^{n-1}}\ {\dl z}{\dl x},
			\end{equation*}
		\item
			\begin{equation*}\label{eq-pw-union-ii}
				\int_{(D_1\cup D_2)\times (D_1\cup D_2)} \big|f(x)-f(y)\big|^{p(x)} \ {\dl y}{\dl x}
				 \leqslant \beta'\int_\Om \frac{\diam(\Om)^{n+p(x)-1}}{n+p(x)-1}  \int_\Om \frac{|\nabla f(z)|^{p(x)}}{|z-x|^{n-1}}\ {\dl z}{\dl x},
			\end{equation*}
	\end{enumerate}
	for constants \(\beta,\beta'>0\) depending on \(p_+:=\norm{p}{L^\infty(\Om)}\), the sets \(S_i, D_i\), \(i=1,2\), and \(\Om\). 
\end{lemma}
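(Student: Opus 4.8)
The plan is to use the overlap $D_1\cap D_2$ --- which is open, nonempty, and therefore of positive Lebesgue measure $m:=|D_1\cap D_2|>0$ --- as a \emph{bridge} between the two domains, reducing both estimates to the single--domain inequality of Lemma~\ref{lem-pw-ss}. Throughout the argument I will freely use that $D_i\subset\Om$, that $\diam(D_i)\leqslant\diam(\Om)$, and that $t\mapsto t^{n+p(x)-1}$ is increasing; consequently every domain of integration $D_i$ and every factor $\diam(D_i)$ appearing along the way can be replaced by $\Om$, resp.\ $\diam(\Om)$, at no cost. I will also repeatedly bound $2^{p(x)-1}\leqslant 2^{p_+-1}$.

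For part (i), I would start --- exactly as in the proof of Lemma~\ref{lem-pw-ss} --- from the convexity inequality $(a+b)^{p}\leqslant 2^{p-1}(a^{p}+b^{p})$, valid for $a,b\geqslant0$ and $p\geqslant1$: for $x\in D_1$, $y\in D_2$ and $z\in D_1\cap D_2$,
\[
|f(x)-f(y)|^{p(x)}\leqslant 2^{p(x)-1}|f(x)-f(z)|^{p(x)}+2^{p(x)-1}|f(z)-f(y)|^{p(x)}.
\]
Averaging this over $z\in D_1\cap D_2$, then integrating over $(x,y)\in D_1\times D_2$, and enlarging the inner $D_1\cap D_2$ to $D_1$, resp.\ $D_2$, dominates the left--hand side of (i) by
\[
\frac{|D_2|\,2^{p_+-1}}{m}\int_{D_1}\!\int_{D_1}|f(x)-f(z)|^{p(x)}\,\dl z\,\dl x
\;+\;\frac{2^{p_+-1}}{m}\int_{D_1}\!\int_{D_2}\!\int_{D_1\cap D_2}|f(z)-f(y)|^{p(x)}\,\dl z\,\dl y\,\dl x ,
\]
where in the first term the integrand is independent of $y$. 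The first term is handled directly by Lemma~\ref{lem-pw-ss} applied to the pair $(S_1,D_1)$, followed by the enlargement to $\Om$. For the second term, I would fix $x\in D_1$ and \emph{freeze} the exponent to the constant $q:=p(x)\in[1,p_+]$: since Lemma~\ref{lem-pw-ss} holds for any measurable --- in particular constant --- exponent, applying it to $(S_2,D_2)$ with exponent $q$ (after enlarging $D_1\cap D_2$ to $D_2$) bounds the inner double integral by $\alpha_2\,\diam(D_2)^{n+q-1}(n+q-1)^{-1}\int_{D_2}\!\int_{D_2}|\nabla f(w)|^{q}|w-y|^{-(n-1)}\,\dl w\,\dl y$. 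Integrating the free $y$--variable against the kernel by Lemma~\ref{lem-aux-int-bounds} leaves the \emph{unweighted} quantity $\int_{D_2}|\nabla f(w)|^{q}\,\dl w$, into which the weight $|w-x|^{-(n-1)}$ is reinserted at the crude cost of $\diam(\Om)^{n-1}$, using $|w-x|\leqslant\diam(\Om)$ for $w\in D_2$, $x\in D_1$. Recalling $q=p(x)$, enlarging to $\Om$, and integrating back over $x\in D_1$ gives the second term bounded by a constant multiple of the right--hand side of (i); collecting the constants --- all built from $2^{p_+}$, $|D_i|$, $|S_i|$, $m$, $\diam(\Om)$, $\omega_n$ and $n$ --- yields (i).

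For part (ii), I would decompose
\[
(D_1\cup D_2)\times(D_1\cup D_2)\subseteq(D_1\times D_1)\cup(D_2\times D_2)\cup(D_1\times D_2)\cup(D_2\times D_1)
\]
and bound the integral over the union by the sum of the four integrals over the rectangles. The two diagonal pieces $\int_{D_i\times D_i}|f(x)-f(y)|^{p(x)}\,\dl y\,\dl x$ are controlled by Lemma~\ref{lem-pw-ss} applied to $(S_i,D_i)$ followed by enlargement to $\Om$; the integral over $D_1\times D_2$ is exactly part (i); and the integral over $D_2\times D_1$ is part (i) with the roles of $D_1$ and $D_2$ exchanged, which is legitimate since the hypotheses are symmetric in the two domains. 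Summing the four constants gives $\beta'$.

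The only genuinely delicate point I anticipate is the bookkeeping in the second term of part (i): after freezing $q=p(x)$ and invoking Lemma~\ref{lem-pw-ss}, the resulting upper bound carries the factor $\diam(D_2)^{n+q-1}(n+q-1)^{-1}$ and a $q$--homogeneous gradient integral, and both must be re-expressed in terms of the original variable $x$ and bounded uniformly for $x\in D_1$ before the outer $\dl x$--integration can be performed and the outcome recognized as a piece of the claimed right--hand side over $\Om$. Everything else is a routine chain of domain enlargements and applications of Lemma~\ref{lem-aux-int-bounds}.
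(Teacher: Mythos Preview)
Your proposal is correct and follows essentially the same route as the paper: both proofs use the convexity inequality with an intermediate point $z\in D_1\cap D_2$, average over $z$, reduce the first resulting term to Lemma~\ref{lem-pw-ss} on $D_1$, freeze $p(x)$ as a constant to apply Lemma~\ref{lem-pw-ss} on $D_2$ for the second, and then reinsert the weight $|w-x|^{-(n-1)}$ at the cost of a factor $\diam(\Om)^{n-1}$ together with Lemma~\ref{lem-aux-int-bounds} (the paper carries this out via the identity in \eqref{eq-swap-vars}, which is your ``integrate out the free variable, then reinsert the weight'' step done in the reverse order). Part~(ii) is obtained in both cases by the four--rectangle decomposition, with the diagonal pieces handled directly by Lemma~\ref{lem-pw-ss} and the off--diagonal pieces by part~(i) and its symmetric counterpart.
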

\Remark{The proof provides upper bounds for the optimal constants $\beta$ and $\beta'$ (compare (\ref{eq-constant-beta}) and (\ref{eq-constant-beta'})). With $\kappa(\Omega):= (n+1)\omega_n \diam(\Omega)^n$ 
as in \eqref{eq-constant-kappa}, the constants $\beta$ and $\beta'$ take the form
\begin{equation} \label{eq:betas}
	\beta
	:= 2^{p_+-1} \frac{\kappa(\Om)}{|D_1\cap D_2|}(\alp_{\Om,1}+\alp_{\Om,2}),\qquad
	\beta'
	:= \left(2^{p_+} \frac{\kappa(\Om)}{|D_1\cap D_2|}+1\right) (\alp_{\Om,1}+\alp_{\Om,2}) \,	
\end{equation}
where the constants $\alp_{\Om,i}$ are given by  $\alp_{\Om,i}:=2^{p_+}\kappa(\Omega)/|S_i|$.
}
\begin{proof}[Proof of Lemma \ref{lem-pw-union}]
	 Within this proof we will refer to the constants $\alp_i := \alp(p_+,S_i,D_i)$, $i=1,2$, 
	defined in (\ref{eq-constant-alpha}). Following the estimate \(\tilde{p}_+ \leqslant p_+\), we replaced \(\tilde{p}_+\) by \(p_+\). Note that we always have the inequalities
	\begin{equation} \label{eq:alphaOmegai}
		\alp_i
		 \leqslant \alp_{\Om,i}
		:=\alp(p_+,S_i,\Om)
            =\frac{2^{{p_{+}}}}{|S_i|}\kappa(\Omega)
		\quad \text{ for } i=1,2.
	\end{equation}
	We will prove (ii) and will obtain (i) as an intermediate result. We split up the integral in (ii) as
	\begin{align*}
		\int_{(D_1\cup D_2)\times (D_1\cup D_2)} &\big|f(x)-f(y)\big|^{p(x)} \ {\dl y}{\dl x}\\
		= &\int_{D_1\times D_1} \big|f(x)-f(y)\big|^{p(x)} \ {\dl y}{\dl x}
		+ \int_{D_2\times D_2} \big|f(x)-f(y)\big|^{p(x)} \ {\dl y}{\dl x}\\
		&+ \int_{D_1\times D_2} \big|f(x)-f(y)\big|^{p(x)} \ {\dl y}{\dl x}
		+ \int_{D_2\times D_1} \big|f(x)-f(y)\big|^{p(x)} \ {\dl y}{\dl x}\\
		=:&I_1+I_2+I_3+I_4.
	\end{align*}
	We first treat the mixed terms $I_3$ and $I_4$. Since \(D_1\) and \(D_2\) are open with non-empty intersection,
	for any \(z\in B:=D_1\cap D_2\) there holds
	\begin{align*}
		I_3
		&= \int_{D_1}\int_{D_2} \big|f(x)-f(y)\big|^{p(x)} \ {\dl y}{\dl x}\\
		& \leqslant \int_{D_1}\int_{D_2} 2^{p(x)-1}\big|f(x)-f(z)\big|^{p(x)} \ {\dl y}{\dl x}
		+ \int_{D_1}\int_{D_2} 2^{p(x)-1}\big|f(z)-f(y)\big|^{p(x)} \ {\dl y}{\dl x}\\
		& \leqslant M(p_+)\left[ |D_2|\int_{D_1} \big|f(x)-f(z)\big|^{p(x)} \ {\dl x}
		+ \int_{D_1} \int_{D_2}\big|f(z)-f(y)\big|^{p(x)} \ {\dl y}{\dl x}\right],
	\end{align*}
	where \(M(p_+):=2^{p_+-1}\).
	By integrating both sides over \(B\) we find
	\begin{onealign}\label{eq-i3-split}		
		&|B|\int_{D_1}\int_{D_2} \big|f(x)-f(y)\big|^{p(x)} \ {\dl x}{\dl y}\\
		& \leqslant M(p_+)\left[ |D_2|\int_B\int_{D_1} \big|f(x)-f(z)\big|^{p(x)} \ {\dl x}{\dl z}
		+ \int_{D_1}\int_B\int_{D_2} \big|f(z)-f(y)\big|^{p(x)} \ {\dl y}{\dl z}{\dl x}\right] .
	\end{onealign}
       We estimate the first integral on the right-hand side of the previous relation using Lemma \ref{lem-pw-ss}. We get
	\begin{onealign}\label{eq-i31}
		\int_B\int_{D_1} \big|f(x)-f(z)\big|^{p(x)} \ {\dl x}{\dl z}
            & \leqslant \int_{D_1\times D_1} \big|f(x)-f(z)\big|^{p(x)} \ {\dl x}{\dl z}\\
		& \leqslant \alp_1 \int_{D_1}\left(\int_0^{\diam(D_1)} r^{n+p(x)-2}\ {\dl r} \right) \int_{D_1} \frac{|\nabla f(w)|^{p(x)}}{|w-x|^{n-1}}\ {\dl w}{\dl x}\\
		& \leqslant \alp_{\Om,1} \int_{\Om}\left(\int_0^{\diam(\Om)} r^{n+p(x)-2}\ {\dl r} \right) \int_{\Om} \frac{|\nabla f(w)|^{p(x)}}{|w-x|^{n-1}}\ {\dl w}{\dl x}.
	\end{onealign}
        where, we recall,  \(\diam(\Om) \geqslant\diam(D_1)\).
	Further, we observe that Lemma \ref{lem-pw-ss} applies in particular for a constant exponent. Hence, for the second integral on the right-hand side of \eqref{eq-i3-split} we get
    \begin{align*}
        \int_{D_1}&\int_B\int_{D_2} \big|f(z)-f(y)\big|^{p(x)} \ {\dl y}{\dl z}{\dl x}\\
            & \leqslant \int_{\Om}\int_{D_2\times D_2} \big|f(z)-f(y)\big|^{p(x)} \ {\dl y}{\dl z}{\dl x}\\
            & \leqslant \alp_2\int_{\Om}\int_{D_2}\left(\int_0^{\diam(D_2)} r^{n+p(x)-2}\ {\dl r} \right) \int_{D_2} \frac{|\nabla f(u)|^{p(x)}}{|u-z|^{n-1}}\ {\dl u}{\dl z}{\dl x}\\
            &= \alp_2\int_{\Om}\left(\int_0^{\diam(D_2)} r^{n+p(x)-2}\ {\dl r} \right) \int_{D_2}\int_{D_2} \frac{|\nabla f(u)|^{p(x)}}{|u-z|^{n-1}}\ {\dl u}{\dl z}{\dl x}.
    \end{align*}
    With the help of a similar computation to the one already employed in (\ref{eq-swap-vars}) it follows that
	\begin{onealign}\label{eq-i32}
		\int_{D_1}&\int_B\int_{D_2} \big|f(z)-f(y)\big|^{p(x)} \ {\dl y}{\dl z}{\dl x}\\
		& \leqslant  \alp_2\kappa(D_2)\int_{\Om}\left(\int_0^{\diam(D_2)} r^{n+p(x)-2}\ {\dl r} \right) \int_{D_2} \frac{|\nabla f(u)|^{p(x)}}{|u-x|^{n-1}}\ {\dl u}{\dl x}\\
		& \leqslant \alp_{\Om,2}\kappa(\Om) \int_{\Om}\left(\int_0^{\diam(\Om)} r^{n+p(x)-2}\ {\dl r} \right) \int_{\Om} \frac{|\nabla f(u)|^{p(x)}}{|u-x|^{n-1}}\ {\dl u}{\dl x},
	\end{onealign}
	with \(\kappa(D_2),\kappa(\Om)\) given by (\ref{eq-constant-kappa}). Using (\ref{eq-i31}) and (\ref{eq-i32}) to estimate (\ref{eq-i3-split}), and given that $|\Omega| \leqslant\kappa(\Omega)$, we obtain
	\begin{onealign}\label{eq-i3}
		I_3
		& \leqslant \frac{M(p_+)}{|B|}\kappa(\Om)(\alp_{\Om,1}+\alp_{\Om,2}) 
		\int_{\Om}\left(\int_0^{\diam(\Om)} r^{n+p(x)-2}\ {\dl r} \right) \int_{\Om} \frac{|\nabla f(w)|^{p(x)}}{|w-x|^{n-1}}\ {\dl w}{\dl x}.
	\end{onealign}
	This proves (i) with
	\begin{onealign}\label{eq-constant-beta}
	\beta
	&
	:= \frac{2^{p_+-1}}{|D_1\cap D_2|}\kappa(\Om)(\alp_{\Om,1}+\alp_{\Om,2}).
	\end{onealign}
	Note that, by symmetry, the same bound holds for \(I_4\) with the same constant \(\beta\), because the only difference is in the fact that the roles of \(D_1\) and \(D_2\) are exchanged. 
	
	Finally, to estimate $I_1$ and $I_2$ we directly apply Lemma \ref{lem-pw-ss}. Overall, with $\beta$ as in \eqref{eq-constant-beta} and $\alp_{\Om,1},\alp_{\Om,2}$ as in \eqref{eq:alphaOmegai}, we infer (ii) with 
	\begin{equation}\label{eq-constant-beta'}
		\beta'
		:= 2\beta + \alp_{\Om,1} + \alp_{\Om,2}=\left(\frac{2^{p_+}}{|D_1\cap D_2|}\kappa(\Om)+1\right)(\alp_{\Om,1}+\alp_{\Om,2}) \, .
	\end{equation}
	This concludes the proof of the lemma.
\end{proof}
As a direct consequence of the previous proof, we obtain that a similar result holds true for any union of two domains that have a non-empty intersection and each admits some Poincaré--Wirtinger-type inequality as given by Lemma \ref{lem-pw-ss}. 

\begin{corollary}\label{cor-pw-union-arb}
    For a domain \(\Om\subset \rn\), let \(\tilde{D}_1, \tilde{D}_2\subset \Om\) be two bounded domains with \(\tilde{D}_1\cap \tilde{D}_2\ne \emptyset\). Further assume that for any \(f\in C^\infty(\overline{\Om})\) the domains \(\tilde{D}_1, \tilde{D}_2\) admit estimates
	\begin{equation*}\label{eq-pw-ss-arb}
		\int_{\tilde{D}_i\times \tilde{D}_i} |f(x)-f(y)|^{p(x)} \ {\dl y}{\dl x}
		 \leqslant \tilde{\alp}_i \int_{\tilde{D}_i}\frac{\diam(\Omega)^{n+p(x)-1}}{n+p(x)-1} \int_{\tilde{D}_i} \frac{|\nabla f(z)|^{p(x)}}{|z-x|^{n-1}}\ {\dl z}{\dl x}
	\end{equation*}
	for some constants \(\tilde{\alp}_i>0\) for \(i=1,2\). Then, there holds 
	\begin{equation}\label{eq-pw-union-arb}
		\int_{(\tilde{D}_1\cup \tilde{D}_2)\times (\tilde{D}_1\cup \tilde{D}_2)} \big|f(x)-f(y)\big|^{p(x)} \ {\dl y}{\dl x}
		 \leqslant \tilde{\beta}'\int_\Om\frac{\diam(\Omega)^{n+p(x)-1}}{n+p(x)-1} \int_\Om \frac{|\nabla f(z)|^{p(x)}}{|z-x|^{n-1}}\ {\dl z}{\dl x},
	\end{equation}
	where the constant \(\tilde{\beta}'>0\) is given by
	\begin{onealign}\label{eq-constant-betat'}
		\tilde{\beta}'
		:=\left(2^{p_+} \frac{\kappa(\Om)}{|\tilde{D}_1\cap \tilde{D}_2|}+1\right)(\tilde{\alp}_1+\tilde{\alp}_2).
	\end{onealign}
	with \(\kappa(\Om)\) as in (\ref{eq-constant-kappa}).

\end{corollary}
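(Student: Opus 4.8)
The plan is to rerun the proof of Lemma~\ref{lem-pw-union}(ii) essentially verbatim, the only change being that the two places where that argument invokes Lemma~\ref{lem-pw-ss} (once for $D_1$, once for $D_2$) are now handled by the estimates assumed here for $\tilde{D}_1$ and $\tilde{D}_2$. We may assume $\Om$ bounded, since otherwise $\diam(\Om)=+\infty$ and the right-hand side of \eqref{eq-pw-union-arb} is $+\infty$ whenever $\nabla f\not\equiv 0$ (and both sides vanish when $f$ is constant). Set $B:=\tilde{D}_1\cap\tilde{D}_2$, $M(p_+):=2^{p_+-1}$, and write $\mathcal R$ for the integral on the right-hand side of \eqref{eq-pw-union-arb}. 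First I would split
\[
\int_{(\tilde{D}_1\cup\tilde{D}_2)\times(\tilde{D}_1\cup\tilde{D}_2)}|f(x)-f(y)|^{p(x)}\ \dl y\,\dl x=I_1+I_2+I_3+I_4,
\]
where $I_1,I_2,I_3,I_4$ are the integrals over $\tilde{D}_1\times\tilde{D}_1$, $\tilde{D}_2\times\tilde{D}_2$, $\tilde{D}_1\times\tilde{D}_2$ and $\tilde{D}_2\times\tilde{D}_1$, respectively. The diagonal terms are immediate: the hypothesis, after enlarging both domains of integration on its right-hand side from $\tilde{D}_i$ to $\Om$ (all integrands being nonnegative), gives $I_1\leqslant\tilde{\alp}_1\mathcal R$ and $I_2\leqslant\tilde{\alp}_2\mathcal R$.

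For the mixed term $I_3$ I would chain through a point of $B$, exactly as in \eqref{eq-i3-split}: from $(a+b)^p\leqslant 2^{p-1}(a^p+b^p)$ one has, for every $z\in B$,
\[
|f(x)-f(y)|^{p(x)}\leqslant M(p_+)\big(|f(x)-f(z)|^{p(x)}+|f(z)-f(y)|^{p(x)}\big),
\]
and integrating in $(x,y)\in\tilde{D}_1\times\tilde{D}_2$, then in $z\in B$, and dividing by $|B|$ yields
\[
|B|\,I_3\leqslant M(p_+)\Big[|\tilde{D}_2|\int_B\!\int_{\tilde{D}_1}|f(x)-f(z)|^{p(x)}\,\dl x\,\dl z+\int_{\tilde{D}_1}\!\int_B\!\int_{\tilde{D}_2}|f(z)-f(y)|^{p(x)}\,\dl y\,\dl z\,\dl x\Big].
\]
In the first bracket term one has $\int_B\int_{\tilde{D}_1}\leqslant\int_{\tilde{D}_1}\int_{\tilde{D}_1}$ since $B\subseteq\tilde{D}_1$, so the hypothesis for $\tilde{D}_1$ bounds it by $\tilde{\alp}_1\mathcal R$; combined with $|\tilde{D}_2|\leqslant|\Om|\leqslant\kappa(\Om)$ this contributes $M(p_+)\kappa(\Om)\tilde{\alp}_1\mathcal R$. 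In the second bracket term I would freeze $x$: for fixed $x$, $\int_B\int_{\tilde{D}_2}|f(z)-f(y)|^{p(x)}\leqslant\int_{\tilde{D}_2}\int_{\tilde{D}_2}|f(z)-f(y)|^{p(x)}$, to which the hypothesis for $\tilde{D}_2$ (read with the constant exponent $p(x)$) applies; one then replaces the singular kernel $|u-z|^{1-n}$, $z\in\tilde{D}_2$, by $\kappa(\Om)\,|u-x|^{1-n}$ via the computation of \eqref{eq-swap-vars} together with Lemma~\ref{lem-aux-int-bounds}, integrates in $x$, and enlarges to $\Om$, obtaining at most $\tilde{\alp}_2\kappa(\Om)\mathcal R$. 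Hence $I_3\leqslant\big(2^{p_+-1}\kappa(\Om)/|B|\big)(\tilde{\alp}_1+\tilde{\alp}_2)\mathcal R=:\tilde{\beta}\mathcal R$, and $I_4\leqslant\tilde{\beta}\mathcal R$ follows by the same computation with the roles of $\tilde{D}_1$ and $\tilde{D}_2$ interchanged.

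Summing the four pieces gives the left-hand side of \eqref{eq-pw-union-arb} $\leqslant(\tilde{\alp}_1+\tilde{\alp}_2+2\tilde{\beta})\mathcal R$, and since $2\tilde{\beta}=2^{p_+}\kappa(\Om)(\tilde{\alp}_1+\tilde{\alp}_2)/|\tilde{D}_1\cap\tilde{D}_2|$ this is precisely $\tilde{\beta}'\mathcal R$ with $\tilde{\beta}'$ as in \eqref{eq-constant-betat'}. The one subtlety — the only genuine departure from a word-for-word repetition of the proof of Lemma~\ref{lem-pw-union} — is the freezing step above, which uses the assumed estimate for $\tilde{D}_2$ (resp. $\tilde{D}_1$) with $p(x)$ replaced by a constant; strictly, this presupposes that the hypothesized inequality holds not only for $p$ but for every constant exponent in $[1,p_+]$ with the same constant. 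This costs nothing in practice: the estimates fed to the $\tilde{D}_i$ always come from Lemma~\ref{lem-pw-ss} (or from iterating the present corollary), which hold for arbitrary measurable, hence in particular constant, exponents — and that is the only setting in which the corollary is used in the proof of Theorem~\ref{thm-pw}. Apart from this, the argument is pure bookkeeping of constants already carried out for Lemma~\ref{lem-pw-union}.
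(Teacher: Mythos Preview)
Your proposal is correct and follows exactly the paper's own route: the paper's proof simply says to rerun the argument of Lemma~\ref{lem-pw-union} and read off the constant $\tilde{\beta}'=2\tilde{\beta}+\tilde{\alp}_1+\tilde{\alp}_2$ with $\tilde{\beta}=2^{p_+-1}\kappa(\Om)(\tilde{\alp}_1+\tilde{\alp}_2)/|\tilde{D}_1\cap\tilde{D}_2|$, which is precisely what you do. Your remark on the ``freezing'' subtlety---that the hypothesis must also be available for constant exponents in $[1,p_+]$---is a genuine point the paper leaves implicit, and your justification (that in the only application the $\tilde{D}_i$-estimates come from Lemma~\ref{lem-pw-ss}, which holds for arbitrary measurable exponents) is the right way to close it.
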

\begin{proof}
	By splitting the integral on the left-hand side of (\ref{eq-pw-union-arb}) as in the proof of Lemma \ref{lem-pw-ss} and 
	following the same subsequent steps, we end up with an inequality as in (\ref{eq-pw-union-arb}) with a constant $\tilde{\beta}'$ (compare with (\ref{eq-constant-beta'})) 
	given by $\tilde{\beta}' = 2 \tilde{\beta}+ \tilde{\alp}_{1} + \tilde{\alp}_{2}$, where 
	\begin{equation*}
		\tilde{\beta}
		:= 2^{p_+-1}\frac{\kappa(\Om)}{|\tilde{D}_1\cap \tilde{D}_2|} (\tilde{\alp}_1+\tilde{\alp}_2).
	\end{equation*}
	The claimed assertion follows.
\end{proof}

\subsection{Step 3.} \label{sec:step3}
We are now in the position to prove our main result.

\renewcommand{\proofname}{Proof of Theorem \ref{thm-pw}}
\begin{proof}
	In view of Proposition \ref{prop-union-ss} we write \(\Om\) as the finite union
	\begin{equation}\label{eq-om-union}
		\Om
		= \bigcup_{i=1}^N D_i
	\end{equation}
    for some \(N\in\N\), where every \(D_i\) is a bounded domain which is starshaped with respect to some subset \(S_i\subset D_i\), where every \(S_i\) can be chosen as a ball with some radius \(R>0\) independent of \(i\). We further note that, without loss of generality, the sets \(D_i\) are labelled so that
	\begin{equation}\label{eq-imp-prop}
		D_{i+1}\cap \left( \bigcup_{j=1}^iD_{j}\right) \ne \emptyset 
		\quad \forall i=1,...,N-1.
	\end{equation}
	To see this, we proceed as follows: We pick an arbitrary \(i_1\in \{1,...,N\}\) and a corresponding \(D_{i_1}\). Since \(\Om\) is open and connected by assumption and all sets \(D_i\) are open as well, there exists \(i_2\ne i_1\) in \(\{1,...,N\}\) such that \(D_{i_2}\cap D_{i_1}\) is non-empty. Now the union \(D_{i_1}\cup D_{i_2}\) is again open and by the same arguments as before there must exist some \(i_3\in \{1,...,N\}\backslash\{i_1,i_2\}\) such that \((D_{i_1}\cup D_{i_2})\cap D_{i_3}\ne \emptyset\). Then arguing iteratively, we find sets \(D_{i_1},...,D_{i_N}\) that exactly fulfil property (\ref{eq-imp-prop}).
	
	We will now inductively apply Corollary \ref{cor-pw-union-arb}. To that end, we first note that according to Lemma \ref{lem-pw-ss} and since \(|S_i|=\omega_nR^n\) for every \(i=1,...,N\), we have the estimates
	\begin{equation*}
		\int_{D_i}\int_{D_i} |f(x)-f(y)|^{p(x)} \ {\dl y}{\dl x}
		 \leqslant \tilde{\alp} \int_\Om \frac{\diam(D_i)^{n+p(x)-1}}{n+p(x)-1}  \int_\Om \frac{|\nabla f(z)|^{p(x)}}{|z-x|^{n-1}}\ {\dl z}{\dl x}
	\end{equation*}
	for
	\begin{equation*}
		\tilde{\alp}
		:= 2^{p_+} \frac{\kappa(\Om)}{\omega_n R^n }.
	\end{equation*}
	We further set 
	\begin{align*}
		\lambda(\Om)
		:= \inf_{1 \leqslant i \leqslant N-1} \textstyle  \left| D_{i+1}\cap \left( \bigcup_{j=1}^iD_{j}\right)\right|.
	\end{align*}
	Then, for \(E_{i}:=D_{i+1}\cup \left( \bigcup_{j=1}^iD_{j}\right)\) we get from Corollary \ref{cor-pw-union-arb} for every \(i\in \{1,...,N-1\}\) an estimate of the form
	\begin{equation*}
	\int_{E_{i}\times E_{i}} \big|f(x)-f(y)\big|^{p(x)} \ {\dl y}{\dl x}
	 \leqslant \tilde{\beta}'_{i}\int_\Om\left(\int_0^\rho r^{n+p(x)-2}\ {\dl r} \right) \int_\Om \frac{|\nabla f(z)|^{p(x)}}{|z-x|^{n-1}}\ {\dl z}{\dl x},
	\end{equation*}
	where the constants \(\tilde{\beta}'_{i}>0\) are recursively given by (as usual we set \(p_+:=\norm{p}{L^\infty(\Om)}\))
	\begin{equation*}
		\tilde{\beta}'_{i}
		:= \left(\kappa_{p_{+}}(\Om) +1 \right)(\tilde{\beta}'_{i-1}+\tilde{\alp})
		\quad \text{ for } i=2,...,N-1
	\end{equation*}
     and $\tilde{\beta}'_{1}
	:= \left(\kappa_{p_{+}}(\Om) +1 \right)2\tilde{\alp}$, with $\kappa_{p_{+}}(\Om)=2^{p_+}{\kappa(\Om)}/{\lambda(\Om)}$. This is a recurrence equation, 
	whose unique solution is given by
	\begin{equation} \label{eq:solrec}
		\tilde{\beta}'_{i}=\frac{\tilde\alpha}{\kappa_{p_{+}}(\Om)} \left((2 \kappa_{p_{+}}(\Om) +1) (\kappa_{p_{+}}(\Om)+1)^i-(\kappa_{p_{+}}(\Om)+1)\right) 
	\end{equation}
	for \(i\in \{1,...,N-1\}\).
	In particular, we have
	\begin{equation} \label{eq:solrec2}
		\tilde{\beta}'_{N-1} \leqslant  \frac{\tilde\alpha}{\kappa_{p_{+}}(\Om)}(1+2\kappa_{p_{+}}(\Om))^{N}\, .
	\end{equation}
	By definition we have \(\Om=E_{N-1}\), and thus
	\begin{equation*}
		\int_{\Om\times \Om} \big|f(x)-f(y)\big|^{p(x)} \ {\dl y}{\dl x}
		 \leqslant \tilde{C}\int_\Om\left(\int_0^\rho r^{n+p(x)-2}\ {\dl r} \right) \int_\Om \frac{|\nabla f(z)|^{p(x)}}{|z-x|^{n-1}}\ {\dl z}{\dl x},
	\end{equation*}
	where the constant \(\tilde{C}=\frac{\tilde\alpha}{\kappa_{p_{+}}(\Om)}(1+2\kappa_{p_{+}}(\Om))^{N}\) is given by the right-hand side of \eqref{eq:solrec2}.
        Thus, by Jensen's inequality 
        \begin{onealign}\label{eq-final-exp}
            \int_\Om \big|f(x)-\mv{f}{\Om}\big|^{p(x)} \ {\dl x}
		& \leqslant \frac{1}{|\Om|}\int_\Om \int_\Om\big| f(x)- f(y)             \big|^{p(x)} \ {\dl x}{\dl y}\\
            & \leqslant \frac{\tilde{C}}{|\Om|}\int_{\Om}\frac{\diam(\Om)^{n+p(x)-1}}{n+p(x)-1}\int_\Om \frac{|\nabla f(z)|^{p(x)}}{|z-x|^{n-1}}\ {\dl z}{\dl x}\\
            & \leqslant \frac{\tilde{C}}{|\Om|}\frac{(\max\{\diam(\Om),1\})^{n+p_+-1}}{n}\int_\Om \int_\Om \frac{|\nabla f(z)|^{p(x)}}{|z-x|^{n-1}}\ {\dl z}{\dl x}.
        \end{onealign}
		This concludes the proof.
\end{proof}
\renewcommand{\proofname}{Proof}

%%%%%%%%%%%%%%%%%%%%%%%%%%%%%%%%%%%%%%%%%%
\appendix
\section{Auxiliary Results} \label{aux:appendix}
\renewcommand{\theequation}{\thesection.\arabic{equation}}
In this appendix, we collect the proofs of the results stated in Section \ref{sec-prelim}, with the goal to clarify the connection between Definitions \ref{def-ss-i} and \ref{def-ss-ii} of starshaped domains with respect to a subset. Afterward, we prove Proposition \ref{prop-union-ss}, showing  that any Lipschitz domain can be written as the finite union of domains which are starshaped with respect to open balls. 
 
\begin{lemma}[Necessity of condition (\ref{eq-ls})]\label{lem-nc}
	Let \(D\subset \rn\) be starshaped with respect to a set \(S\subset D\) according to Definition \ref{def-ss-ii}. Then, it necessarily fulfills condition (\ref{eq-ls}), i.e.
	\begin{equation*}
	[x,y]\subset D \quad \forall x\in S, \forall y\in D,
	\end{equation*}
	where \([x,y]\) is the line segment with endpoints \(x\) and \(y\). 
\end{lemma}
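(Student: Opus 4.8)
The plan is to reduce everything to the analysis of a single ray and to exploit the uniqueness of its boundary point. Fix $x\in S$ and $y\in D$; we may assume $y\neq x$, since otherwise $[x,y]=\{x\}\subset D$ trivially. Put $v:=(y-x)/|y-x|$, $T:=|y-x|$ and $\gamma(t):=x+tv$ for $t\geq 0$, so that $\gamma(0)=x$, $\gamma(T)=y$, and the segment $[x,y]$ equals $\gamma([0,T])$. Let $R:=\{\gamma(t):t\geq 0\}$. By Definition~\ref{def-ss-ii}, $R$ meets $\partial D$ in exactly one point, say $p=\gamma(\rho)$; since $x\in D$ and $D$ is open, $x\notin\partial D$, so $\rho>0$. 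It suffices to prove that $\gamma([0,\rho))\subset D$ and that $T<\rho$: together these give $[x,y]=\gamma([0,T])\subset\gamma([0,\rho))\subset D$, which is exactly condition~(\ref{eq-ls}).

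The first (easy) half is to identify the connected component of $x$ in the relatively open set $R\cap D$: it is a relatively open interval of the form $\gamma([0,b))$ for some $b\in(0,\infty]$. Here $b<\infty$, for otherwise $R\subset D$ and $R\cap\partial D=\emptyset$, contradicting Definition~\ref{def-ss-ii}. By maximality of the component $\gamma(b)\notin D$, while $\gamma(b)=\lim_{t\uparrow b}\gamma(t)\in\overline D$; since $D$ is open this yields $\gamma(b)\in\overline D\setminus D=\partial D$. Uniqueness of the boundary point on $R$ forces $\gamma(b)=p$, hence $b=\rho$. Thus $\gamma([0,\rho))\subset D$ and $\gamma(\rho)=p\notin D$; in particular $T\neq\rho$, because $\gamma(T)=y\in D$.

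It remains to exclude $T>\rho$, and this is the step I expect to be the crux. Assume $T>\rho$ and consider the connected component of $y$ in $R\cap D$, an interval $\gamma((c,e))$ with $c<T<e$. Since $\gamma(\rho)=p\notin D$ and $\rho<T<e$, we get $\rho\leq c$, so $c>0$ and, exactly as before, $\gamma(c)\in\partial D$; uniqueness forces $\gamma(c)=p$ and $c=\rho$. If $D$ is bounded (as in all our applications) we are already done: then $\gamma(t)\notin D$ for all large $t$, so $e<\infty$, whence $\gamma(e)\in\partial D$ by the same argument and $\gamma(e)=p$, i.e.\ $e=\rho<T<e$, absurd. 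In general one must first rule out $e=\infty$, i.e.\ rule out that $R$ re-enters $D$ past $p$ and then stays in $D$. In that remaining configuration $R\setminus\{p\}\subset D$ with $p\in\partial D$, so there are points of $D^{c}$ arbitrarily close to $p$; since $R\cap D^{c}=\{p\}$ and $R$ is one-dimensional, such points can be chosen off $R$. A careful analysis of the rays from $x$ through such points $q$ — each starts at $x\in D$, hits $D^{c}$ near $p$, and (since $\gamma$ stays in the open set $D$ for all large parameters and these rays converge to $R$ as $q\to p$) must re-enter $D$ farther out — produces, by repeating the component bookkeeping, a ray from $S$ carrying two distinct points of $\partial D$, contradicting Definition~\ref{def-ss-ii}. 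Hence $e=\infty$ is impossible, and we reach the contradiction already noted, so $T<\rho$ and the lemma follows. Everything reduces to the bookkeeping of connected components of $R\cap D$; the only genuinely delicate point is the impossibility of re-entry, where one must invoke Definition~\ref{def-ss-ii} on rays near $R$, not merely on $R$ itself.
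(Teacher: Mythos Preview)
Your argument follows the same contradiction strategy as the paper's: if $[x,y]$ leaves $D$, the ray through $x$ and $y$ must meet $\partial D$ at least twice, violating Definition~\ref{def-ss-ii}. The paper phrases this via the signed distance function $b_D$ and the intermediate value theorem (a point $z\in[x,y]\setminus D$ forces a zero of $b_D$ on $[x,z]$ and another on $[z,y]$); you carry out the equivalent bookkeeping with connected components of $R\cap D$. For bounded $D$ --- the only case the paper ever uses --- both versions are complete and correct, and your write-up is in fact more explicit than the paper's about why the two boundary points are distinct.

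Your treatment of the unbounded case (the scenario $e=\infty$, where $R\setminus\{p\}\subset D$) is only a sketch, and the sketch has a real hole. The claim that a perturbed ray $R_q$ through some $q\in D^c$ close to $p$ carries two distinct points of $\partial D$ tacitly requires $q\in\mathrm{int}(D^c)$. If instead $q\in\partial D$, then Definition~\ref{def-ss-ii} forces $R_q\cap\partial D=\{q\}$, and your own component argument (applied to $R_q$ in place of $R$, using that $\gamma_q(t_0)\in D$) yields $R_q\setminus\{q\}\subset D$; no second boundary point appears and no contradiction arises. You would therefore need to exhibit a point of $\mathrm{int}(D^c)$ in every neighborhood of $p$, which you neither assert nor prove. (The paper's proof elides the parallel issue: its ``another'' zero on $[z,y]$ is distinct from the one on $[x,z]$ only when $b_D(z)>0$, i.e., when $z\in\mathrm{int}(D^c)$.) Since every domain appearing downstream in the paper is bounded, this residual case does not affect any application.
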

\begin{proof}
	We define the signed distance function associated with \(D\) as
	\begin{equation}\label{eq-signed-dist}
	b_{D}:\rn\to \R, \quad 
	x\mapsto
	\begin{cases}
	\dist(\p D, x),\quad &\text{for\ }  x\in \text{int}(\rn\backslash D),\\
	0, \quad &\text{for\ } x\in \p D,\\
	-\dist(\p D, x),\quad &\text{for }  x\in \text{int}(D).
	\end{cases}
	\end{equation}
	Here, for \(A\subset \rn\) the expression \(\text{int}(A)\) denotes the interior of \(A\). By classical properties of the signed distance, the map \(b_D\) is Lipschitz continuous. We observe that
	the intersections of any line segment with \(\p D\) correspond exactly to those points of the line segment that have zero distance function. Moreover,
	every line segment connecting two points from the interior to the exterior of \(D\) has at least one intersection with \(\p D\).
	
	After these premises, we argue by contradiction. Let \(x\in S\) and \(y\in D\) and assume that \([x,y]\) is not fully contained in \(D\). 
	This implies that there exists some \(z\in [x,y]\) such that \(z\notin D\). On the one hand, 
	the line segment \([x,y]\) must have at least two intersections with \(\p D\) because \(b_D\) has at least 
	one zero in \([x,z]\) and at least another one in \([z,y]\). On the other hand, because \(D\) is 
	starshaped with respect to \(S\) (in the sense of Definition \ref{def-ss-ii}) by assumption, the ray 
	starting in \(x\) and passing through \(y\) must have exactly one intersection with \(\p D\). This leads to a contradiction.
\end{proof}

As it turns out, in general, the notion of starshaped domains by means of condition (\ref{eq-ls}) is weaker than the one provided in Definition \ref{def-ss-ii}. The following counterexample illustrates this fact (compare also with Figure \ref{fig-counterex}). 

Let \(B_1\subset \R^2\) be the unit ball centered in the origin, and define \(D=B_1\backslash \{(0,y):\ y\in [\frac{1}{2},1)\}\) as the slit disk. Then, setting \(S:=\{(0,0)\}\), for every \((x,y)\in D\), the line segment \([(0,0),(x,y)]\) is completely contained in \(D\). On the other hand, the ray starting from \((0,0)\in S\) and going through the point \((0,1)\) has infinitely many intersections with \(\p D\) along the segment \(\{(0,y):\ y\in [1/2,1)\}\).

\begin{figure}[htbp] 
	\centering
	\includegraphics[width=12cm]{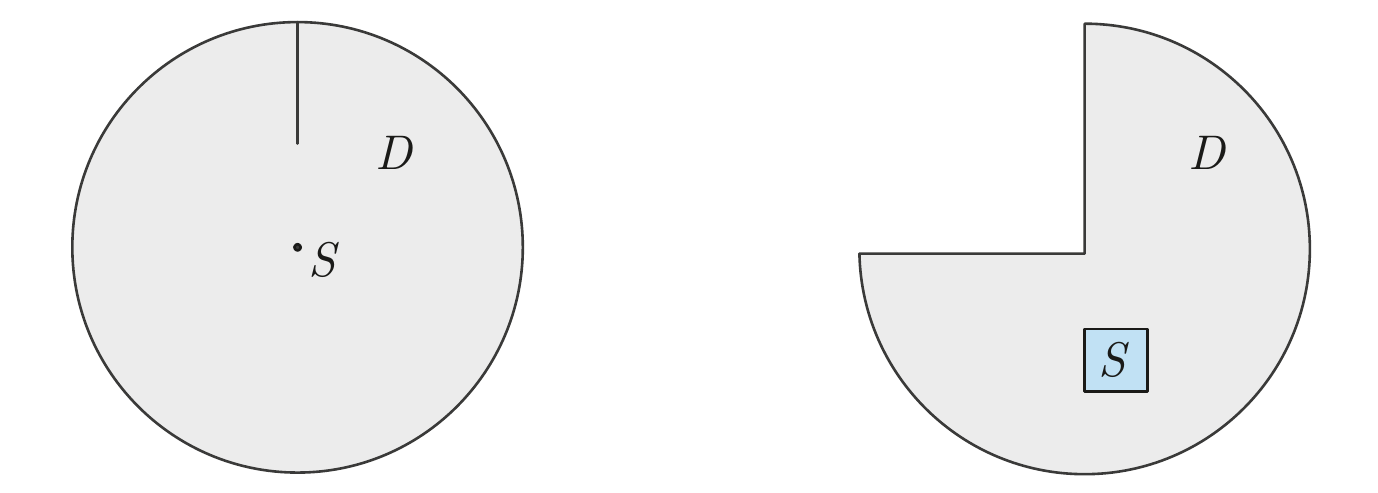}  
	\caption{Two counterexamples to the sufficiency of (\ref{eq-ls}). In the left picture \(D\) is the slit disk and \(S\) is just a point. In the right picture \(D\) is constructed by removing a whole quarter from the unit disk and \(S\) is a closed square. In both cases there exist rays starting at a point in \(S\) that have infinite intersections with the boundary of \(D\).}
	\label{fig-counterex}
\end{figure}

However, the situation can be rectified in the case in which \(S\) is open (and \(D\) is any domain). In this latter setting, property (\ref{eq-ls}) is also sufficient for \(D\) to be starshaped with respect to \(S\). 

\begin{lemma}[Sufficiency of condition (A)]\label{lem-sc}
	Let \(D\subset \rn\) be a domain and let \(S\subset D\) be an open set such that the pair \((D,S)\) fulfills property (\ref{eq-ls}). Then \(D\) is starshaped with respect to \(S\) in the sense of Definition \ref{def-ss-ii}. 
\end{lemma}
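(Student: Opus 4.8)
The plan is to fix $x\in S$ and a direction $\sigma\in\sn$, set $r(t):=x+t\sigma$ for $t\geqslant 0$, and analyze the ray through the set $T:=\{t\geqslant 0:\ r(t)\in D\}$. First I would show that $T$ is an interval of the form $[0,t^{*})$ with $t^{*}:=\sup T\in(0,+\infty]$. Indeed $0\in T$ since $x\in D$, and $D$ open forces $t^{*}>0$; if $t\in T$ with $t>0$, then $r(t)\in D$, so property \eqref{eq-ls} applied to $x\in S$ and $y=r(t)\in D$ gives $[x,r(t)]\subset D$, hence $r(s)\in D$ for all $s\in[0,t]$, i.e.\ $[0,t]\subset T$; finally $T=r^{-1}(D)\cap[0,+\infty)$ is relatively open, so a finite supremum cannot be attained, which means $r(t^{*})\notin D$ whenever $t^{*}<+\infty$. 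Since $r(t^{*})=\lim_{t\uparrow t^{*}}r(t)\in\overline D$, this yields $r(t^{*})\in\p D$, and as $r(t)\in D$ for $t<t^{*}$, the point $r(t^{*})$ is the first intersection of the ray with $\p D$. (For a bounded $D$ — the situation relevant to Proposition~\ref{prop-union-ss} — one always has $t^{*}<+\infty$, so such an intersection exists; if the ray never leaves $D$ it meets $\p D$ nowhere and there is nothing to check.) It then remains to establish uniqueness, i.e.\ that $r(t)\notin\overline D$ for every $t>t^{*}$.

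Here I would argue by contradiction: suppose $t_{1}>t^{*}$ with $r(t_{1})\in\overline D$. If $r(t_{1})\in D$, then \eqref{eq-ls} (with $x\in S$, $y=r(t_{1})$) gives $[x,r(t_{1})]\subset D$, but $r(t^{*})=(1-t^{*}/t_{1})x+(t^{*}/t_{1})r(t_{1})$ lies on this segment because $0<t^{*}<t_{1}$, contradicting $r(t^{*})\notin D$. The remaining case $r(t_{1})\in\p D$ is the one where the openness of $S$ is used. Choose $\rho>0$ with $B_{\rho}(x)\subset S$ and a sequence $y_{k}\in D$ with $y_{k}\to r(t_{1})$. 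By \eqref{eq-ls}, for each $k$ every point $(1-\tau)x'+\tau y_{k}$ with $x'\in B_{\rho}(x)$, $\tau\in[0,1]$ belongs to $D$; fixing $\tau=s:=t^{*}/t_{1}\in(0,1)$, this says that the open ball $B_{(1-s)\rho}(q_{k})\subset D$, where $q_{k}:=(1-s)x+s\,y_{k}$. Now $q_{k}\to(1-s)x+s\,r(t_{1})=x+st_{1}\sigma=r(t^{*})$, while the radius $(1-s)\rho>0$ is fixed, so for $k$ large $|r(t^{*})-q_{k}|<(1-s)\rho$, whence $r(t^{*})\in B_{(1-s)\rho}(q_{k})\subset D$, again contradicting $r(t^{*})\notin D$. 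Therefore the ray meets $\p D$ only at $t=t^{*}$, and $D$ is starshaped with respect to $S$ in the sense of Definition~\ref{def-ss-ii}.

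I expect the last step to be the main obstacle: the bare segment condition \eqref{eq-ls} does not prevent a ray from grazing $\p D$ again after leaving $D$ (cf.\ the counterexamples preceding the statement and Figure~\ref{fig-counterex}, where $S$ fails to be open), and it is the openness of $S$ that repairs this. The key point is that \eqref{eq-ls} forces an entire \emph{solid} cone with base $B_{\rho}(x)$ and apex any $y\in D$ to lie in $D$, giving a tube of definite radius $\propto(1-s)\rho$ around the open segment $(x,y)$; letting $y=y_{k}\to r(t_{1})$ along such tubes then traps the hypothetical second boundary point $r(t^{*})$ in the interior of $D$. Everything else — the structure of $T$, and the easy sub-case $r(t_{1})\in D$ — is routine bookkeeping.
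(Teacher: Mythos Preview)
Your proof is correct and rests on the same key mechanism as the paper's: both arguments exploit the openness of $S$ to show that a putative second boundary intersection can be trapped in the interior of $D$ by approximating it with points of $D$ and using property~\eqref{eq-ls} along a perturbed (``thickened'') segment. The organization differs slightly---the paper argues directly by picking two boundary points $y_1,y_2$ on a ray and perturbing $y_2$ to some $y_3\in D$ so that the line through $y_3$ and $y_1$ still hits $S$, whereas you first set up the exit-time structure $T=[0,t^*)$ and then run an explicit cone/tube argument---but the substance is the same. (A tiny remark: your sub-case $r(t_1)\in D$ is actually vacuous, since $T=[0,t^*)$ already forces $r(t)\notin D$ for every $t\geqslant t^*$; it does no harm, but you could drop it.)
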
 
\begin{proof}
	We argue by contradiction and assume that there exist \(y_1,y_2\in\p D\) with \(y_1\ne y_2\), and a segment starting in some \(x\in S\) such that it intersects \(\p D\) in at least these two points. Without loss of generality we assume that such segment coincides with $[x,y_2]$, that $[x,y_2]\cap \p D=\{y_1\}\cup\{y_2\}$, and that \(y_1\) is an interior point of $[x,y_2]$.
	
	The basic idea is that \(S\) being open allows perturbing the segment infinitesimally to force a contradiction with property (\ref{eq-ls}). Indeed, since $y_2 \in \partial D$, there exists some direction \(\sigma\in \sn\) and \(\veps>0\) such that \(y_3:=y_2+\veps\sigma\in D\) and such that the segment from \(y_3\) containing $y_1$ will meet a neighborhood of \(x\) fully contained in S. The fact that \(D\) is open guarantees that \(y_1\in \p D\) is not an element of \(D\) itself, leading to a violation of (\ref{eq-ls}).
\end{proof}

For the proof of Proposition \ref{prop-union-ss} we rely on the following auxiliary lemma.

\begin{lemma}[Cone property of Lipschitz domains]\label{lem-cone-prop}
	Let \(\Om\subset \rn\) be a bounded Lipschitz domain. Then, every \(x\in\Om\) is the vertex of a cone whose closure is contained in \(\Om\), meaning that for every \(x\in\Om\) there exist \(a,b>0\) and a rotation \(R\in SO(n)\) such that the closure of the cone 
	\begin{equation}
 \label{eq:def-cone}
	\mathcal{C}(a,b)
	:=\{y=(y_1,...,y_n)\in\rn:\ ay_n^2>y_1^2+\cdots +y_{n-1}^2 \text{\ and\ } 0<y_n<b\}
	\end{equation}
	is contained in \(\Om\) after rotating it by \(R\) and translating it to \(x\). Furthermore, \(a\) and \(b\) from above can be chosen uniformly for all \(x\in\Om\).
\end{lemma}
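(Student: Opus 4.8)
The plan is to derive the cone property from the local graph description of the Lipschitz boundary together with a compactness argument, treating separately the points of $\Om$ that lie in a thin collar around $\p\Om$ and those that lie at a definite distance from $\p\Om$. First I would recall that, for a bounded Lipschitz domain, each $q\in\p\Om$ admits an open ball $B_{2r_q}(q)$ in which, after a rotation $R_q\in SO(n)$, the set $\Om$ coincides with the strict epigraph $\{(y',y_n): y_n>\gamma_q(y')\}$ of an $L_q$-Lipschitz function $\gamma_q\colon\R^{n-1}\to\R$. Since $\p\Om$ is compact, finitely many balls $B_{r_1}(q_1),\dots,B_{r_m}(q_m)$ already cover $\p\Om$ (with the epigraph representation valid on the doubled balls $B_{2r_i}(q_i)$); I would then set $L:=\max_i L_i$ and fix $\delta>0$ with $\delta<\tfrac12\min_i r_i$, to be further shrunk below.

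For the \emph{interior} points, if $\dist(x,\p\Om)\geq\delta$ then $B_\delta(x)\subset\Om$; since the closure of $\mathcal{C}(a_0,b_0)$ is contained in the ball of radius $b_0\sqrt{1+a_0}$ about its vertex, any choice of $a_0,b_0>0$ with $b_0\sqrt{1+a_0}\leq\delta$ yields $x+\overline{\mathcal{C}(a_0,b_0)}\subset B_\delta(x)\subset\Om$, with $R=\mathrm{Id}$. For a \emph{collar} point, $\dist(x,\p\Om)<\delta$: choosing a nearest boundary point $q\in\p\Om$ we have $q\in B_{r_i}(q_i)$ for some $i$, hence $x\in B_{r_i+\delta}(q_i)\subset B_{2r_i}(q_i)$. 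Writing $x=(x',x_n)$ in the rotated coordinates of chart $i$, so that $x_n>\gamma_i(x')$, the heart of the argument is the elementary estimate that, for every $y=(y',y_n)$ with $0<y_n<b$, $|y'|^2\leq a\,y_n^2$ and $a\leq 1/L^2$,
\[
\gamma_i(x'+y')\;\leq\;\gamma_i(x')+L|y'|\;\leq\;\gamma_i(x')+L\sqrt{a}\,y_n\;\leq\;\gamma_i(x')+y_n\;<\;x_n+y_n,
\]
so that $x+R_i\,\overline{\mathcal{C}(a,b)}$ is contained in the epigraph; provided in addition $b\sqrt{1+a}\leq\tfrac12\min_i r_i$, this truncated cone also stays inside $B_{2r_i}(q_i)$, whence $x+R_i\,\overline{\mathcal{C}(a,b)}\subset\Om\cap B_{2r_i}(q_i)$.

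It then remains to make the constants uniform: I would take $a:=\min\{1/L^2,\,a_0\}$ and $b>0$ small enough that $b\sqrt{1+a}\leq\min\{\delta,\tfrac12\min_i r_i\}$; since $\overline{\mathcal{C}(a,b)}\subset\overline{\mathcal{C}(a',b')}$ whenever $a\leq a'$ and $b\leq b'$, both the interior and the collar construction go through verbatim with this single pair $(a,b)$, only the rotation $R$ depending on $x$. I expect the main difficulty to be bookkeeping rather than conceptual: one must ensure that, in the collar case, the truncated cone issued from $x$ stays inside the chart where the graph representation holds — which is precisely what forces $\delta$ to be small compared with the chart radii and $b$ to be chosen uniformly small — while the fact that the cone opens \emph{into} $\Om$ comes for free from the epigraph form of the charts.
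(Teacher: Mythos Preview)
Your proposal is correct and follows essentially the same strategy as the paper's: split $\Om$ into a $\delta$-collar of $\p\Om$ and its complement, use compactness of $\p\Om$ to extract a finite Lipschitz-chart cover with a uniform Lipschitz constant $L$ to place a uniformly sized cone at each collar point, and fit a small ball (hence a cone of any orientation) around each interior point. The paper only sketches this argument and defers the details to \cite{Fiorenza_2016}, whereas you actually carry out the key Lipschitz estimate and the bookkeeping on chart radii; apart from a harmless strict-versus-nonstrict inequality in the interior case (take $b\sqrt{1+a}<\delta$ rather than $\leq\delta$ to guarantee the \emph{closed} cone lies in the open set $\Om$), the argument is complete.
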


A detailed proof of the claim can be found for example in \cite[Theorem 3.3.1, p.91, and Corollary 3.3.2, p.96]{Fiorenza_2016}. Here we will only present the intuitive idea of the proof.
\begin{proof}
	Since \(\Om\) is bounded and the boundary \(\p \Om\) is of class \(C^{0,1}\), there exist \(\delta>0\), \(L>0\) such that inside the boundary region
	\begin{equation*}
	\Om_{\delta}
	:=\{x\in\Om:\ \dist(\p\Om,x)<\delta\}
	\end{equation*}
	the set \(\Om\) is locally (with respect to some finite open covering of the boundary \(\p\Om\)) the subgraph of a Lipschitz function with Lipschitz constant \(L\). From this, it follows that every point belonging to \(\Om_\delta\) is the vertex of a (suitably rotated) cone of the type \(\mathcal{C}(L',b_1)\)  whose closure is contained in \(\Om\), for some slope \(L'<L\)  and some uniform \(b_1>0\) (possibly depending on the dimension). Furthermore, there exist uniform \(a_2,b_2>0\) (again possibly depending on \(n\)) such that for every \(x\in\Om\backslash\Om_\delta\) all cones of the type \(C(a_2,b_2)\) whose vertex is \(x\) and with arbitrary orientation are such that their closure is contained in \(\Om\). This yields the claim.
\end{proof}

Now, any cone as in \eqref{eq:def-cone} is starshaped with respect to an open ball in the sense of Definition \ref{def-ss-i}. Thus, to ease the notation, in what follows \(\mathcal{C}_{x}\) will always denote a cone  contained in \(\overline{\Om}\) and starshaped with respect to a ball centered in \(x\).

\renewcommand{\proofname}{Proof of Proposition \ref{prop-union-ss}}

\begin{proof}
    We need to show that every bounded Lipschitz domain \(\Om\subset \R^n\) is the union of a finite number of domains which are
starshaped with respect to open balls in the sense of Definition \ref{def-ss-i} (or equivalently Definition \ref{def-ss-ii} since we established their equivalence above).
	We follow the proof strategy of \cite[Lemma 2, p.22]{Maz_ya_1998}. It follows from Lemma \ref{lem-cone-prop} that we can write \(\Om\) as
	the (possibly not finite) union of all points in the domain and the corresponding cones which can be chosen congruent. This implies in particular that there exists a set \(X\subset \Om\) such that 
	\begin{equation*}
	\Om
	= \cup_{x\in X} \mathcal{C}_x,
	\end{equation*}
	where all cones are starshaped with respect to open balls with the same radius \(R>0\).  Without loss of generality, we can assume that $X$ is a closed set. These cones yield a finite covering of \(\Om\) via the following construction:
	\begin{enumerate}
		\item
		Because \(\Om\) is bounded by assumption, the same holds true for \(X\subset \Om\), so that $X$ is compact. Hence, there exists a finite open cover of \(X\) with balls \(B_{R/2}(x_i)\) centered at \(x_i\in X\), where  \(1 \leqslant i  \leqslant N\) for some \(N\in\N\). 
		\item
		For every \(i\in \{1,..,N\}\) we define \(U_{i}:=B_{R/2}(x_i)\cap X\) and  
		\begin{equation*}
		\Om_i:= \cup_{x\in U_{i}} \mathcal{C}_x.
		\end{equation*}
		We note that \(B_{R/2}(x_i)\subset B_R(x)\subset \mathcal{C}_x\) for every \(x\in U_{i}\). It follows from either the Definition \ref{def-ss-i} or Definition \ref{def-ss-ii} that all \(\mathcal{C}_x\) are also starshaped with respect to \(B_{R/2}(x_i)\). Consequently, also the union \(\Om_i\) is starshaped with respect to \(B_{R/2}(x_i)\). 
	\end{enumerate}
	Altogether this guarantees the existence of sets \(\Om_i\), \(1 \leqslant i \leqslant N\), that are all starshaped with respect to open balls with  the same radius and fulfill
	\begin{equation*}
	\Om
	= \bigcup_{i=1}^N \Om_i.
	\end{equation*}
	This completes the proof. 
\end{proof}
\renewcommand{\proofname}{Proof}

We finally prove an auxiliary estimate for integrals appearing frequently throughout our computations. The key point in the next result is the fact that the set $\Omega$ has \emph{positive} finite measure.

\begin{lemma}\label{lem-aux-int-bounds}
	Let \(\Om\subset \R^n\) be a measurable set of positive finite measure, then for all \(z\in \R^n\) there holds
	\begin{equation}\label{eq-int-est}
		\int_\Om \frac{1}{|z-x|^{n-1}}\ {\dl x}
		 \leqslant (n+1)\omega_n^{1-\frac{1}{n}}|\Om|^{\frac{1}{n}},
	\end{equation}
	where \(\omega_n\) is the volume of the unit ball in \(\R^n\).
\end{lemma}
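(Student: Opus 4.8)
The plan is to bound the integral $\int_\Om |z-x|^{-(n-1)}\,\dl x$ by comparing $\Om$ with a ball centred at $z$ of the same measure; this is the standard bathtub/rearrangement trick. First I would set $\rho>0$ to be the radius for which $|B_\rho(z)|=|B_\rho|=\omega_n\rho^n=|\Om|$, i.e.\ $\rho=(|\Om|/\omega_n)^{1/n}$. The key observation is that the kernel $x\mapsto |z-x|^{-(n-1)}$ is radially decreasing about $z$, so replacing $\Om$ by $B_\rho(z)$ only increases the integral: more precisely, $|\Om\setminus B_\rho(z)|=|B_\rho(z)\setminus\Om|$, and on $\Om\setminus B_\rho(z)$ the integrand is $\le \rho^{-(n-1)}$ while on $B_\rho(z)\setminus\Om$ it is $\ge \rho^{-(n-1)}$, so
\begin{equation*}
\int_\Om \frac{\dl x}{|z-x|^{n-1}} \le \int_{B_\rho(z)} \frac{\dl x}{|z-x|^{n-1}}.
\end{equation*}

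Next I would compute the right-hand side explicitly in polar coordinates: using \eqref{eq-onion2} with $u(x)=\chi_{B_\rho(z)}(x)|z-x|^{-(n-1)}$ and translating so $z=0$,
\begin{equation*}
\int_{B_\rho} \frac{\dl x}{|x|^{n-1}} = \left(\int_{\sn}\dl\sigma\right)\int_0^\rho r^{n-1}\cdot r^{-(n-1)}\,\dl r = n\omega_n \cdot \rho,
\end{equation*}
since $|\sn| = n\omega_n$. Substituting $\rho=(|\Om|/\omega_n)^{1/n}$ gives $n\omega_n\cdot(|\Om|/\omega_n)^{1/n} = n\,\omega_n^{1-1/n}|\Om|^{1/n}$, which is even sharper than the claimed bound (the factor $n+1$ instead of $n$ in \eqref{eq-int-est} leaves harmless slack, presumably kept for uniformity with the way the lemma is invoked). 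So the asserted inequality follows a fortiori.

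If one prefers to avoid invoking the rearrangement inequality as a black box, the comparison step can be written out by hand exactly as above: split $\int_\Om = \int_{\Om\cap B_\rho(z)} + \int_{\Om\setminus B_\rho(z)}$ and $\int_{B_\rho(z)} = \int_{\Om\cap B_\rho(z)} + \int_{B_\rho(z)\setminus\Om}$, subtract, and use the two pointwise bounds on the kernel together with the measure equality $|\Om\setminus B_\rho(z)| = |B_\rho(z)\setminus\Om|$ (which holds because $|\Om| = |B_\rho(z)|$). I expect no genuine obstacle here; the only mild subtlety is making sure the integrability of $|z-x|^{-(n-1)}$ near $x=z$ is clear, which it is since $n-1 < n$, so all integrals above are finite and the manipulations are legitimate.
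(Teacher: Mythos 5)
Your proof takes essentially the same route as the paper: set $\rho=(|\Om|/\omega_n)^{1/n}$ so that $|B_\rho(z)|=|\Om|$, use the measure identity $|\Om\setminus B_\rho(z)|=|B_\rho(z)\setminus\Om|$ together with the monotonicity of the kernel to pass from $\int_\Om$ to $\int_{B_\rho(z)}$, then compute the latter in polar coordinates. That is exactly the paper's bathtub comparison.

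One small point worth noting: you are right that the exact value of the comparison integral is $n\omega_n\rho=n\,\omega_n^{1-1/n}|\Om|^{1/n}$, since $|\sn|=n\omega_n$. The paper instead asserts $|\partial B_1|=(n+1)\omega_n$, which is a slip (the surface area of the unit sphere in $\R^n$ is $n\omega_n$, not $(n+1)\omega_n$), and this is where the extraneous factor $n+1$ in the lemma's statement comes from. Your constant $n$ is the sharp one for this estimate, and since $n<n+1$, the inequality as stated follows a fortiori, as you observe. So the proposal is correct, is the same argument, and in fact yields a marginally better constant than the one printed.
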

\begin{proof}
    For \(z\in \R^n\) we set \(R:= (|\Om|/\omega_n)^{1/n}>0\). Then, there holds
    \begin{equation*}
        |B_R(z)|
        =\Om
    \end{equation*}
    where \(B_R(z)\) is the ball centered at \(z\) with radius \(R\). From the equality
    \begin{equation*}
        |B_R(z)\cap \Om|+|B_R(z)\backslash\Om|
        =|B_R(z)|
        =|\Om|
        =|B_R(z)\cap \Om|+|\Om\backslash B_R(z)|
    \end{equation*}
    we further conclude that \(|\Om\backslash B_R(z)|=|B_R(z)\backslash\Om|\). It follows that 
    \begin{align*}
        \int_{\Om\backslash B_R(z)} \frac{1}{|z-x|^{n-1}}\ {\dl x}
        & \leqslant |\Om\backslash B_R(z)|\frac{1}{R^{n-1}}\\
        &= |B_R(z)\backslash\Om|\frac{1}{R^{n-1}}
         \leqslant 
        \int_{B_R(z)\backslash\Om} \frac{1}{|z-x|^{n-1}}\ {\dl x}.
    \end{align*}
    Therewith, for any \(z\in \R^n\) we can estimate the left-hand side of equation (\ref{eq-int-est}) by an integral over the ball \(B_R\) centered at the origin in the following way:
    \begin{align*}
        \int_\Om \frac{1}{|z-x|^{n-1}}\ {\dl x}
        &= \int_{\Om\backslash B_R(z)} \frac{1}{|z-x|^{n-1}}\ {\dl x}
        + \int_{B_R(z)\cap \Om} \frac{1}{|z-x|^{n-1}}\ {\dl x}\\
        & \leqslant \int_{B_R(z)\backslash \Om} \frac{1}{|z-x|^{n-1}}\ {\dl x}
        + \int_{B_R(z)\cap \Om} \frac{1}{|z-x|^{n-1}}\ {\dl x}\\
        &= \int_{ B_R(z)} \frac{1}{|z-x|^{n-1}}\ {\dl x}\\
        &= \int_{ B_R} \frac{1}{|x|^{n-1}}\ {\dl x}.
    \end{align*}
    By using that \((n+1)\omega_n=(n+1)|B_1|=|\p B_1|\) we finally conclude that
    \begin{equation*}
        \int_{B_R} \frac{1}{|x|^{n-1}}\ {\dl x}
        = \int_0^R \int_{\p B_r} \frac{1}{r^{n-1}}\ {\dl \sigma} {\dl r}
        = (n+1)\omega_n R
        = (n+1)\omega_n^{1-\frac{1}{n}}|\Om|^{\frac{1}{n}},
    \end{equation*}
    which proves the claimed inequality. 
\end{proof}

\smallskip

\section*{Acknowledgements}
E.D. acknowledges support
by the Austrian Science Fund (FWF) through projects 10.55776/F65, 10.55776/V662, 10.55776/Y1292, and 10.55776/P35359, as well as from OeAD through the WTZ grants CZ02/2022 and CZ 09/2023.
 G.Di~F. acknowledges support from  the Italian Ministry of Education and Research through the PRIN2022 project {\emph{Variational Analysis of Complex Systems in Material Science, Physics and Biology}} No.~2022HKBF5C. 
 G.Di~F. and L.H. acknowledge support from the Austrian Science Fund (FWF) through the project {\emph{Analysis and Modeling of Magnetic Skyrmions}} (grant 10.55776/P34609). G.Di~F. and L.H. thank the Hausdorff Research Institute for Mathematics in Bonn for its hospitality during the Trimester Program \emph{Mathematics for Complex Materials} funded by the Deutsche Forschungsgemeinschaft (DFG, German Research Foundation) under Germany Excellence Strategy – EXC-2047/1 – 390685813. G.Di~F. and L.H. also thank TU Wien and MedUni Wien for their support and hospitality.

\medskip 

\bibliographystyle{siam} 
\bibliography{master}

\end{document}